\newtheorem{theorem}{Theorem}
\theoremstyle{definition}
\newtheorem{corollary}[theorem]{Corollary}
\newtheorem{lemma}[theorem]{Lemma}
\newtheorem{proposition}[theorem]{Proposition}
\newtheorem{question}[theorem]{Question}
\newtheorem{remark}[theorem]{Remark}
\numberwithin{equation}{subsection}
\title{Links and dynamics}
\author[Valeriy Bardakov, Tatyana Kozlovskaya, and  Olga Pochinka]{Valeriy Bardakov, Tatyana Kozlovskaya, and Olga Pochinka}
\date{\today}
\begin{document}
\maketitle
\begin{abstract}
Knots naturally appear in continuous dynamical systems as flow periodic trajectories. However, discrete dynamical systems are also closely connected  with the theory of knots and links. For example, for Pixton diffeomorphisms, the equivalence class of the Hopf knot, which is the orbit space of the unstable saddle separatrix in the manifold $\mathbb{S}^2\times \mathbb{S}^1$, is a complete invariant of the topological conjugacy of the system. In this paper we distinguish a class of three-dimensional Morse-Smale diffeomorphisms for which the complete invariant of topological conjugacy is the equivalence class of a link in $\mathbb{S}^2\times \mathbb{S}^1$.

We proved that if  $M$ is a  link complement in $\mathbb{S}^3$  (in particular, is $\mathbb{S}^3$), or a 
handlebody $H_g$ of genus $g \geq 0$, or
closed, connected, orientable 3-manifold, then the set of equivalence classes of tame links in $M$ is countable.
As corollary we get  that in $\mathbb{S}^2\times \mathbb{S}^1$ there exists a countable number of equivalence classes of tame links. It is proved that any essential  link  can be realized by a diffeomorphism of the class under consideration.

 \textit{Keywords:} Knot, link, equivalence class of links, braid, mixed braid, quandle, 3-manifold.

 \textit{Mathematics Subject Classification 2010:} Primary 57M27; Secondary 37A17, 37E15, 20F36, 57M25.
\end{abstract}

\maketitle
\tableofcontents

\section{Introduction}
Dynamicists use knot and link invariants to describe periodic orbits of flows, that helps them better understand the underlying ODEs. However, specialists in discrete dynamical systems also actively use the theory of knots and links to describe the invariants of dynamics. In particular, the dynamics of some Morse-Smale diffeomorphisms, given on an arbitrary 3-manifold, can be completely determined (up to topological conjugacy) by the behavior of separatries of saddle points, which in turn is described by a knot or a link in the manifold $\mathbb{S}^2\times \mathbb{S}^1$. 

Notice, that the  topological classification of arbitrary Morse-Smale 3-diffeomorphisms was done in \cite{BGP} and it uses as a complete invariant the equivalence class of a pair of two-dimensional laminations in some unknown 3-manifold. Since the classification (up to a homeomorphism) of 3-manifolds is an open problem, it is a natural wish to select diffeomorphisms that admit well-studied objects as invariants. For example, for a class of 3-diffeomorphisms whose non-wandering set consists of four hyperbolic fixed points, the complete invariant of topological conjugacy is  equivalence class of a Hopf knot in $\mathbb{S}^2\times \mathbb{S}^1$ \cite{BoGr}, \cite{PoTaSh}. In the present paper we  distinguish a class of Morse-Smale 3-diffeomorphisms for which the complete invariant of topological conjugacy is the equivalence class of an essential link in $\mathbb{S}^2\times \mathbb{S}^1$.  

In more details. Let $f \colon M^3\to M^3$ be a Morse-Smale diffeomorphism, given on a closed orientable connected 3-manifold. Let $\Omega_q,\,q\in\{0,1,2,3\}$ be the set of all its periodic points $p$ with $\dim\, W^u_p=q$.
Using $|\Omega_q|$ we denote the number of points in the set $\Omega_q$. Note that $|\Omega_0|\geqslant 1,\,|\Omega_3|\geqslant 1$ (see, for example, \cite{S3}). At the same time, if $|\Omega_0|= 1$ or $|\Omega_3|=1$, then $M^3\cong\mathbb S^3$ \cite{GGZP}. Everywhere below we assume that the set $\Omega_3$ consists of a single source $\alpha$. In this case, the set $\Omega_2$ is empty \cite{global}. If $\Omega_1=\emptyset$, then $f$ is a source-sink diffeomorphism and all such diffeomorphisms are pairwise topologically conjugate \cite{GrMePo2016}. If $|\Omega_1|=1$, then $f$ is a Pixton diffeomorphism and the topological classification of such diffeomorphisms is completely determined by the equivalence class of the Hopf knot  \cite{BoGr}. Everywhere below we assume that $|\Omega_1|>1$.  Renumber the saddle orbits of the set $\Omega_1$: $$\mathcal O_{\sigma_1},\dots,\mathcal O_{\sigma_k}$$ and denote by $m_i$ the period of $\sigma_i$. Then for $i\in\{1,\dots,k\}$, the set $S_i=W^s_{\sigma_i}\cup\alpha$ is homeomorphic to a 2-sphere topologically embedded into $\mathbb S^3$ \cite{GrMePo2016}. 

Denote by $G$ the set of Morse-Smale diffeomorphisms $f\colon\mathbb S^3\to\mathbb S^3$ such that 2-spheres $S_1,\dots,S_k$ bound 3-balls $B_1,\dots,B_k$ with pairwise disjoint interiors in $\mathbb S^3$. For $f\in G$ let $B=\bigcup\limits_{i=1}^k\left(\bigcup\limits_{j=0}^{m_i-1}f^{j}(B_i)\right)$. Then the set $\mathbb S^3\setminus B$ contains exactly one periodic point of the diffeomorphism $f$, which is a fixed sink \cite{GGZP}, let's denote it $\omega$ (see Fig. \ref{pi1}).
\begin{figure}[h!]\center{\includegraphics
[width=0.65\linewidth]{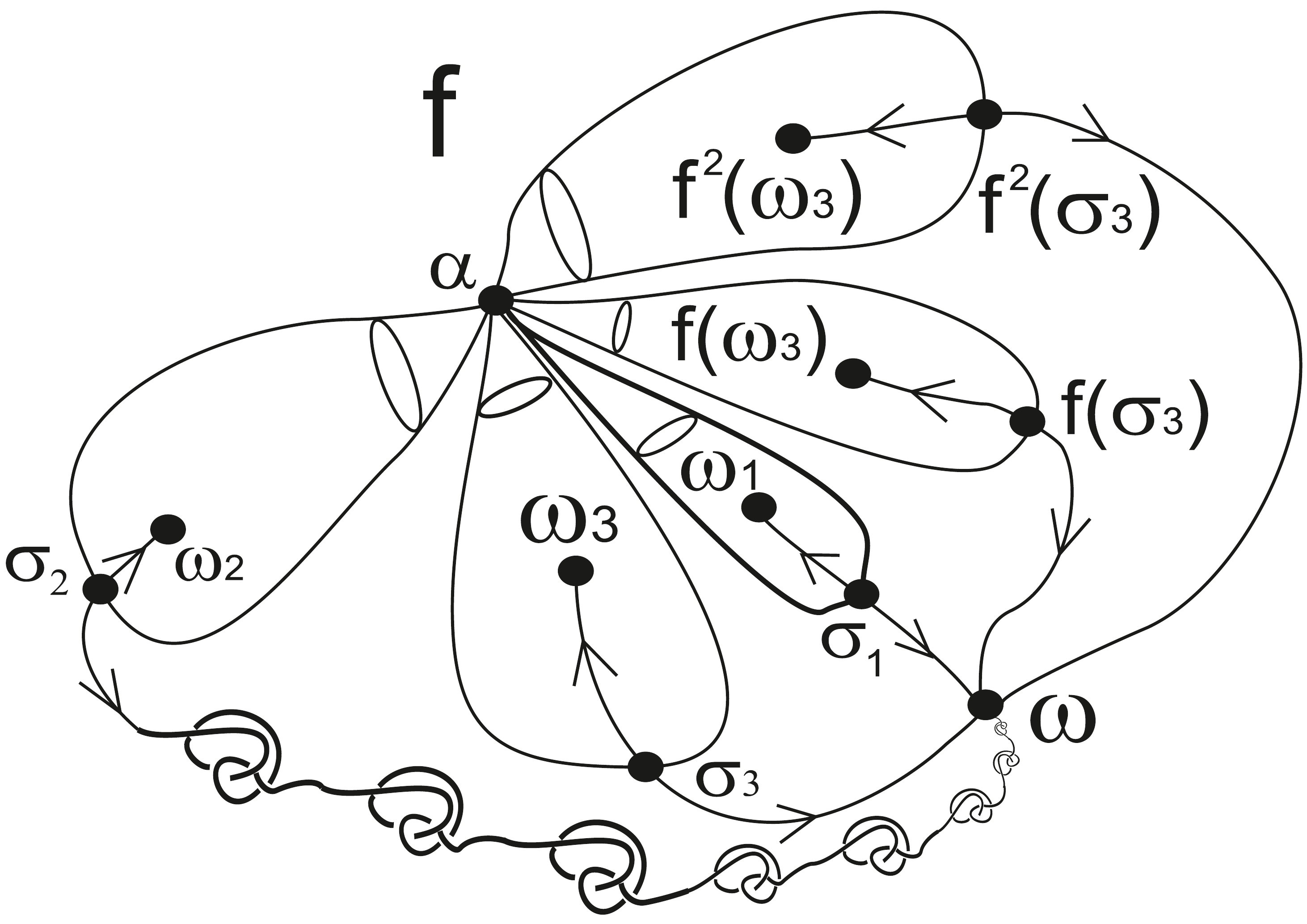}}\caption{Phase portrait of a diffeomorphism $f\in G$}
\label{pi1}
\end{figure}

Let ${\bf x}=(x_1,x_2,x_3)\in\mathbb R^3$, $|| {\bf x} ||=\sqrt{x_1 ^2 +x_2^2+x_3^2}$ and $h: \mathbb R^3\to\mathbb R^3$ be a diffeomorphism given by the formula $h({\bf x})=\frac{\bf x}{2}.$ Define a map $p\colon\mathbb R^3\setminus O\to\mathbb S^{2}\times\mathbb S^1$ by formula  
$$p({\bf x})=\left(\frac{x_1}{||{\bf x}||},\frac{x_2}{||{\bf x}||}, \log_2(||{\bf x}||)\pmod 1\right).
$$
Let $V_{\omega}=W^s_{\omega}\setminus\omega$.
Due to the hyperbolicity of the sink $\omega$ there is a diffeomorphism $\psi_{\omega}\colon V_{\omega}\to\mathbb R^3\setminus O$, which conjugates the diffeomorphisms $f$ and $h$. Let $p_{\omega}=p\psi_{_f}: V_{f}\to\mathbb S^{2}\times\mathbb S^1$ and $\mathcal K_f=p_{\omega}(W^u_{\Omega_1}\cap V_{\omega})$. According to \cite{GrMePo2016}, the set $\mathcal K_f$ is a smooth link, consisting of {\it essential} (non-contractible) knots in $\mathbb S^2\times\mathbb S^1$ (see Fig. \ref{pi2}).
\begin{figure}[h!]\center{\includegraphics
[width=0.45\linewidth]{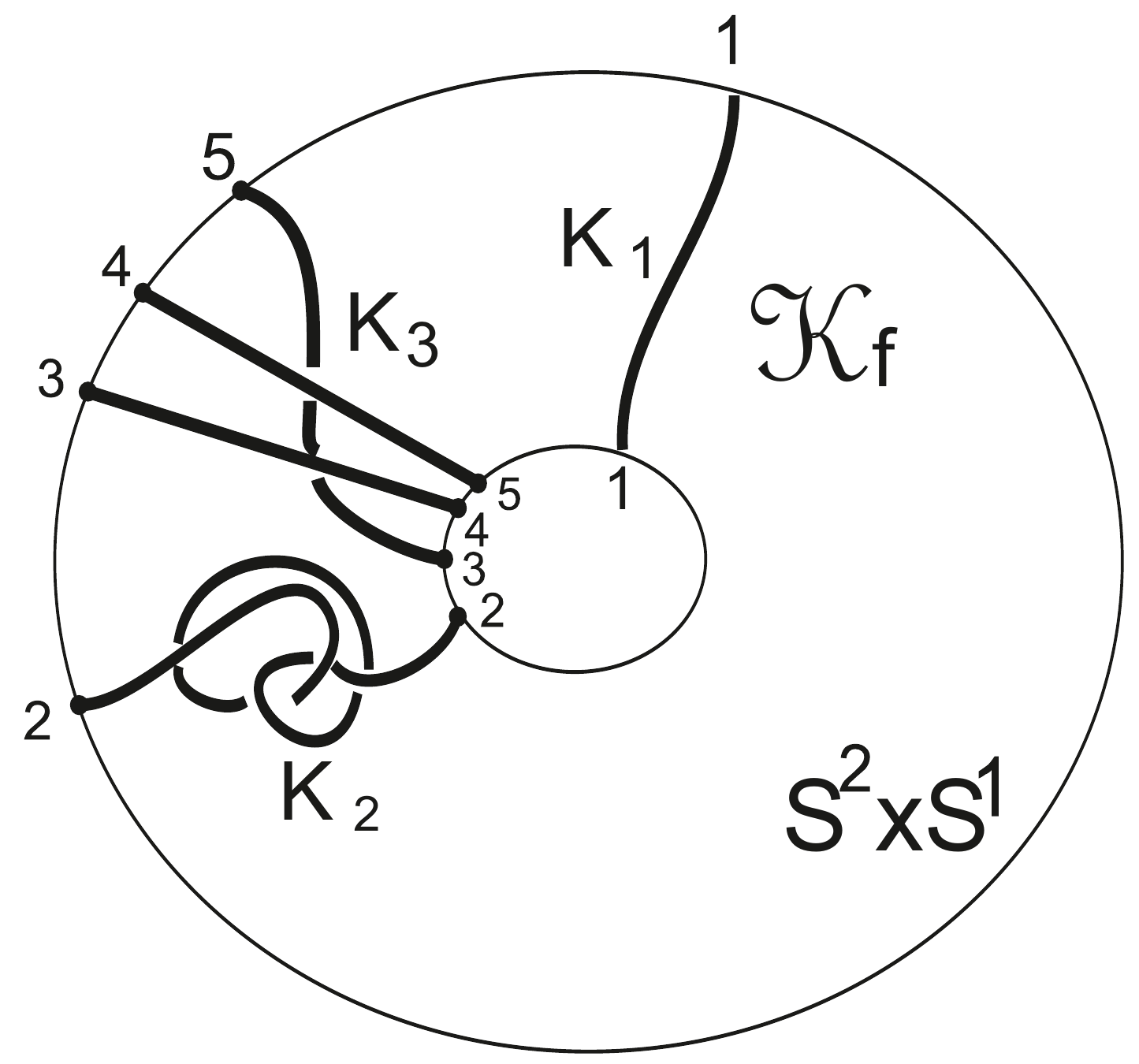}}\caption{Link $\mathcal K_f$ for the diffeomorphism $f\in G$ from figure \ref{pi1}}\label{pi2}
\end{figure}

Two links $\mathcal K_f,\,\mathcal K_{f'}$ are called {\it equivalent} if there is an orientation-preserving homeomorphism   $\varphi:\mathbb S^2\times\mathbb S^1\to\mathbb S^2\times\mathbb S^1$ such that $\varphi(\mathcal K_f)=\mathcal K_{f'}$.
\begin{theorem}\label{coni}
Two diffeomorphisms $f,\,f'\in G$ are topologically conjugated if and only if their links $\mathcal K_f,\,\mathcal K_{f'}$ are equivalent.
\end{theorem}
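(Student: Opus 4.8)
The plan is to prove the two implications separately, handling the ``only if'' (necessity) direction as a warm‑up and reserving the bulk of the work for the ``if'' (sufficiency) construction. For necessity, suppose $h\colon\mathbb{S}^3\to\mathbb{S}^3$ is a conjugacy, $hf=f'h$. Since topological conjugacy preserves the type of each periodic point together with its invariant manifolds, $h$ carries the sink $\omega$ to the sink $\omega'$, the source $\alpha$ to $\alpha'$, and the saddle orbits $\mathcal O_{\sigma_i}$ to those of $f'$ preserving the periods $m_i$; in particular $h(W^s_\omega)=W^s_{\omega'}$ and $h(W^u_{\Omega_1}\cap V_\omega)=W^u_{\Omega_1'}\cap V_{\omega'}$. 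Thus $h$ restricts to a homeomorphism $V_\omega\to V_{\omega'}$ conjugating $f$ to $f'$. Transporting by $\psi_\omega,\psi_{\omega'}$ turns this into a self‑homeomorphism of $\mathbb{R}^3\setminus O$ commuting with the contraction $\mathbf x\mapsto\mathbf x/2$; since $p$ is precisely the quotient by the $\mathbb{Z}$‑action generated by that contraction, the map descends to $\varphi\colon\mathbb{S}^2\times\mathbb{S}^1\to\mathbb{S}^2\times\mathbb{S}^1$ with $\varphi(\mathcal K_f)=\mathcal K_{f'}$. Because the covering $p_\omega$ matches the orientation of $\mathbb{S}^2\times\mathbb{S}^1$ with that of $V_\omega\subset\mathbb{S}^3$, the induced $\varphi$ inherits the orientation behaviour of $h$, and so is orientation‑preserving when $h$ is (this is the natural normalization for conjugacies in this class).

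For sufficiency, let $\varphi$ be an orientation‑preserving homeomorphism with $\varphi(\mathcal K_f)=\mathcal K_{f'}$ and reverse the above. The plan is to lift $\varphi$ to the infinite cyclic cover $p\colon\mathbb{R}^3\setminus O\to\mathbb{S}^2\times\mathbb{S}^1$, obtaining $\widetilde\varphi$, and then set $h_\omega:=\psi_{\omega'}^{-1}\,\widetilde\varphi\,\psi_\omega\colon V_\omega\to V_{\omega'}$. A delicate preliminary point, which I would have to secure first, is that such a lift commutes with $\mathbf x\mapsto\mathbf x/2$ only when $\varphi_\ast=+1$ on $\pi_1(\mathbb{S}^2\times\mathbb{S}^1)\cong\mathbb{Z}$, i.e. when $\varphi$ preserves the $\mathbb{S}^1$‑direction; if instead $\varphi_\ast=-1$ the lift conjugates $f$ to $f'^{-1}$, which is the wrong dynamics. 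I expect to force $\varphi_\ast=+1$ from the coherent dynamical co‑orientation of the link — every component of $\mathcal K_f$ is the image of an unstable separatrix flowing into $\omega$, so all components wind around $\mathbb{S}^1$ in the same sense, and likewise for $\mathcal K_{f'}$ — combined with orientation‑preservation of $\varphi$; establishing this cleanly is the first nontrivial point. Granting it, $h_\omega$ is a homeomorphism with $h_\omega f=f'h_\omega$ sending $W^u_{\Omega_1}\cap V_\omega$ onto $W^u_{\Omega_1'}\cap V_{\omega'}$, and it extends over the sink by $\omega\mapsto\omega'$, continuity being automatic in the linearizing coordinates.

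The main obstacle is then to promote $h_\omega$ to a global conjugacy of $\mathbb{S}^3$, i.e. to extend it over the complementary set $\mathbb{S}^3\setminus W^s_\omega$, consisting of the source $\alpha$ together with the stable $2$‑spheres $S_i=W^s_{\sigma_i}\cup\alpha$ that bound the balls $B_i$. Since $\varphi$ matches the components of the two links it matches the saddle orbits with their periods, hence the spheres $S_i$ with those of $f'$; over each $B_i$ (and over the shared source $\alpha$) one then extends the conjugacy using the standard topological normal form of Morse–Smale dynamics near a $1$‑saddle and near a source, and spreads the extension over the whole orbit $\bigcup_{j}f^{j}(B_i)$ equivariantly. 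The technical heart — of Palis and Bonatti–Grines–Pochinka type — is to arrange a compatible system of linearizing neighbourhoods so that these local extensions agree with $h_\omega$ along the spheres $S_i$ and glue into a single homeomorphism, which amounts to a careful check that $h_\omega$ and its inverse extend continuously as one approaches $\bigcup_i S_i$. Once continuity and bijectivity are in hand, the relation $hf=f'h$ holds on all of $\mathbb{S}^3$ by continuity and the density of the basin $W^s_\omega$.
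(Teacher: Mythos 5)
Your route is genuinely different from the paper's: you try to build the conjugating homeomorphism directly (lift the link equivalence to the infinite cyclic cover of the sink basin, then extend over the rest of $\mathbb S^3$), whereas the paper never constructs a conjugacy at all --- it proves the purely topological Lemma~\ref{bas} (the links $\mathcal K_f,\mathcal K_{f'}$ are equivalent iff the schemes $S_f,S_{f'}$ are equivalent, via the explicit homeomorphism $\xi=p_\alpha p_{\omega}^{-1}$ between the link complement and the scheme complement, extended over solid tori by a meridian/winding-number argument that uses the defining property of class $G$), and then quotes the Bonatti--Grines--Pochinka classification (Proposition~\ref{duke}) to turn scheme equivalence into topological conjugacy. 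Measured against this, your proposal has a genuine gap exactly where the paper leans on \cite{BGP}. The first problem is your argument that the dynamics forces $\varphi_*=+1$ on $\pi_1(\mathbb S^2\times\mathbb S^1)$: it does not work. The map $\varphi(x,t)=(\rho(x),\bar t\,)$, with $\rho$ a reflection of $\mathbb S^2$ and $\bar t$ the reflection of $\mathbb S^1$, is orientation-preserving and induces $-1$ on $\pi_1$; it reverses the winding sense of \emph{every} component simultaneously, so the property ``all components wind in the same sense'' survives intact. The links are unoriented subsets, and nothing obliges an equivalence $\varphi$ to respect the orientations that the dynamics puts on the separatrix knots; hence coherence of winding cannot exclude $\varphi_*=-1$. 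This case is not vacuous: the theorem's hypothesis admits such $\varphi$, and, as you yourself note, the lift then conjugates $f|_{V_\omega}$ to $f'^{-1}|_{V_{\omega'}}$, which can never be globalized, since $f$ has one source and at least two sinks while $f'^{-1}$ has one sink and at least two sources. So you must either show that an equivalence with $\varphi_*=-1$ can always be replaced by one with $\varphi_*=+1$ (a nontrivial statement about links in $\mathbb S^2\times\mathbb S^1$, not about dynamics), or abandon the lifting strategy in that case; the proposal supplies neither.

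The second problem is that the ``technical heart'' you defer --- extending $h_\omega$ over the spheres $S_i$, the balls $B_i$ with their interior sinks, and the source $\alpha$, by arranging compatible linearizing neighbourhoods and gluing local conjugacies --- is not a routine Palis-type verification. Gluing local conjugacies along the two-dimensional stable spheres is precisely the hard content of the classification theorems for this class of diffeomorphisms; it is the part of the argument the paper deliberately avoids by citing Proposition~\ref{duke}, and it cannot be waved through with a reference to ``standard normal forms.'' As written, your necessity direction is correct (it is the standard descent of a conjugacy to the orbit space of the sink basin), but the sufficiency direction remains a program whose two hardest steps --- the $\varphi_*=-1$ case and the global extension --- are missing.
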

\begin{theorem}\label{rea}
For every smooth essential link $\mathcal K\subset\mathbb S^2\times\mathbb S^1$ there is a diffeomorphism $f\in G$ such that $\mathcal K_f$ is equivalent to $\mathcal K$.
\end{theorem}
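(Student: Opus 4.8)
The idea is to reverse the construction $f\mapsto\mathcal K_f$. Recall from the set-up that on the basin $V_\omega$ the map $f$ is conjugate to the contraction $h(\mathbf x)=\mathbf x/2$ of $\mathbb R^3\setminus O$, that $p$ is the quotient for $\langle h\rangle$, and that the unstable separatrices running into $\omega$ are exactly the $h$-invariant curves in $\mathbb R^3\setminus O$ tending to $O$ whose $p$-image is $\mathcal K_f$. Accordingly, starting from a smooth essential link $\mathcal K\subset\mathbb S^2\times\mathbb S^1$, I would first set $L=p^{-1}(\mathcal K)$. Orienting each component $\mathcal K_i$ so that its winding number $m_i$ around the $\mathbb S^1$-factor is positive (legitimate since links are unoriented, and $h$ produces exactly this sign), the preimage $p^{-1}(\mathcal K_i)$ is a union of $m_i$ properly embedded lines from $O$ to $\infty$ that are cyclically permuted by $h$; these will be the unstable separatrices, and the cyclic $h$-action on them will furnish a saddle orbit of period $m_i$.

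Next I would assemble $f$ on $\mathbb S^3=\mathbb R^3\cup\{\infty\}$ with $\omega=O$ and $\alpha=\infty$. On a round ball $B(O,r_0)$ put $f=h$, so $\omega$ is a hyperbolic sink whose basin contains $B(O,r_0)$ and on which one may take $\psi_\omega=\mathrm{id}$, hence $p_\omega=p$. Inside this ball the curves $L$ are $h$-invariant and tend to $\omega$, and since $\log_2\|\mathbf x\|$ sweeps $\mathbb S^1$ infinitely often as $\|\mathbf x\|\to0$, already the tails $L\cap B(O,r_0)$ satisfy $p_\omega(L\cap B(O,r_0))=\mathcal K$. At the outer ends of the lines of $L$, near $\{\|\mathbf x\|=r_0\}$, I would glue in a standard linear saddle model on a small ball around each end point: a hyperbolic saddle $\sigma$ whose one-dimensional unstable manifold continues the corresponding line of $L$ inward to $\omega$ and outward into the exterior, and whose two-dimensional stable manifold is a small transverse disk; in the collar between the saddle chart and $B(O,r_0)$ one interpolates smoothly between the saddle linearization and $h$ while keeping the line invariant. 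Finally, in $\mathbb S^3\setminus B(O,r_0)$ I would install the source $\alpha$ together with one auxiliary hyperbolic sink $\omega_i$ per component, so that the second unstable separatrix of each saddle runs to $\omega_i$ and the local stable disks close up, through $\alpha$, to smooth $2$-spheres $S_i=W^s_{\sigma_i}\cup\alpha$.

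The defining requirement of $G$ is that these spheres bound $3$-balls with pairwise disjoint interiors, and the observation that makes the realization unobstructed is a decoupling: all of the knotting and linking of $\mathcal K$ is carried by the separatrices inside the basin, where $f=h$, whereas the spheres $S_i$ live in the exterior and can be chosen standard and unlinked. Indeed the outer endpoints of the lines of $L$ are finitely many points on $\{\|\mathbf x\|=r_0\}$; placing the saddles in disjoint small balls around them and routing the exterior dynamics, the sinks $\omega_i$ and their stable disks, through disjoint neighbourhoods of $\alpha$, produces spheres $S_i$ bounding manifestly disjoint balls $B_i$, no matter how the interior separatrices are linked. Carrying out the exterior construction $h$-equivariantly along each $h$-orbit of lines turns the $m_i$ saddles over $\mathcal K_i$ into one period-$m_i$ orbit and makes $B=\bigcup_i\bigcup_j f^{\,j}(B_i)$ the required disjoint union, with $\omega$ the unique periodic point of $\mathbb S^3\setminus B$; hence $f\in G$.

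It then remains to verify that $f$ is Morse-Smale, namely finitely many hyperbolic periodic orbits, transversality of the invariant manifolds (built into the local models), and absence of cycles since the construction is gradient-like from $\alpha$ through the saddles to the sinks, and that $\mathcal K_f$, the $p_\omega$-image of the separatrices entering $\omega$, equals $p(L)=\mathcal K$. I expect the main obstacle to be exactly the rigorous form of the decoupling: showing that the saddles can be attached to the prescribed, possibly highly linked, separatrices in such a way that the resulting spheres $S_i$ still bound balls with disjoint interiors, i.e. that passing to the transverse stable disks forces no linking among the $S_i$. The remaining ingredients, local smoothing, interpolation in the collars, and transversality, are standard.
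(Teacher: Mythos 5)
Your overall strategy --- lift $\mathcal K$ to the $h$-invariant family of lines $L=p^{-1}(\mathcal K)$ in $\mathbb R^3\setminus O$, turn those lines into unstable separatrices of inserted saddle orbits, and compactify to $\mathbb S^3$ with $\omega=O$ and $\alpha=\infty$ --- is exactly the paper's starting point, and your identification of $\mathcal K_f$ with $\mathcal K$ via the tails of $L$ in the basin of $\omega$ is essentially right. The genuine gap is the one you flag yourself: the ``decoupling'', i.e.\ the verification that the spheres $S_i=W^s_{\sigma_i}\cup\alpha$ bound balls with pairwise disjoint interiors. This is not a technical afterthought: it is literally the defining condition of the class $G$, hence the crux of the theorem, and in your set-up it is genuinely hard. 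By anchoring the region where $f=h$ to a round ball $B(O,r_0)$ and attaching the saddles, the auxiliary sinks and the completions of the stable disks in the common exterior $\mathbb S^3\setminus B(O,r_0)$, you must construct, inside one region shared by all components, $\sum_i m_i$ pairwise disjoint invariant ``test tubes'' running out to $\alpha$, cyclically permuted by a diffeomorphism you are still in the middle of defining, and matching $h$ along $\partial B(O,r_0)$; nothing in your sketch does this, and asserting it ``no matter how the interior separatrices are linked'' is precisely what has to be proved. There is also a secondary flaw: a component of $p^{-1}(K_i)$ is $h^{m_i}$-invariant, hence self-similar under scaling by $2^{-m_i}$, so in general it crosses every round sphere many times and has no single ``outer endpoint''; one can repair this with first-exit points and shrunken saddle charts avoiding the other arcs, but it is a further loose end.

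The paper dissolves the whole difficulty by choosing a different locus for the surgery: not a round ball, but the disjoint, $h$-invariant tubes $U(\bar K_i)=p^{-1}(U(K_i))$, where $U(K_i)$ is a tubular neighbourhood of $K_i$ downstairs. Each component $U(\bar K^0_i)\cong\mathbb{D}^{2}\times\mathbb R$ contains the whole lifted line from $O$ to $\infty$, and the return map $h^{m_i}|_{U(\bar K^0_i)}$ is conjugate, via a diffeomorphism $\zeta_i$, to the unit translation $g$ of an infinite solid cylinder $C$. The paper replaces $g$ by the time-one map $\phi$ of an explicit saddle--sink (``cherry'') flow on $C$ which coincides with $g$ outside a compact ball and has exactly one hyperbolic saddle $P$ and one hyperbolic sink $Q$ on the core; distributing this modification over the $m_i$ branches of the tube produces the period-$m_i$ saddle and sink orbits, while $f=h$ outside the tubes, so the sink at $O$ and the source at $\infty$ come for free. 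The payoff is exactly your missing step: each tube is $f$-invariant and its complement is $f$-invariant, so $W^s_{\sigma_i}$, the auxiliary sink $\omega_i$, and the ball bounded by $S_i$ are all trapped in $U(\bar K_i)\cup\{\alpha\}$; since the tubes are pairwise disjoint, the class-$G$ condition holds automatically, however knotted or linked $\mathcal K$ may be. If you want to salvage your version, the repair is to replace your round ball by these preimage tubes --- at which point your construction becomes the paper's.
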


Thus, we reduce the verification of the qualitative sameness of the continuum of the trajectories of two diffeomorphisms $f,\,f'\in G$ to the establishment of the coincidence of the equivalence classes of the corresponding links. A further natural step of this reduction is the description of the equivalence classes of links, and the first question in this direction is the number of these classes. 

In the present article we are considering only  {\it tame} links, that is composed by pairwise disjoint polygonal knots in an orientable 3-manifold $M$. Two links  $L$ and $L'$ in $M$ is said to be {\it equivalent} if there is an orientation-preserving homeomorphism $h \colon M \to M$ such that $h(L) = L'$. 

In this article  we prove the following result.
\begin{theorem}\label{mt}
Let $M$ be a 3-manifold from the following list

1) a link complement to $\mathbb{S}^3$, in particular, $\mathbb{S}^3$;

2) the handlebody $H_g$ of genus $g \geq 0$;

3) a closed connected, orientable 3-manifold.

Then the  set of  equivalence classes of tame links in $M$ is countable.
\end{theorem}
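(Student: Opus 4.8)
The plan is to reduce the problem to a counting argument: I will show that every tame link in $M$ is equivalent to a polygonal link all of whose vertices lie in a fixed countable set, and that there are only countably many such polygonal links. Since each equivalence class then contains a representative of this restricted type, the number of classes is at most countable.

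First I would fix a combinatorial model. By Moise's theorem every $3$-manifold — in each of the three cases, whether closed, compact with boundary, or open — admits a triangulation $T$, and because $M$ is second countable this triangulation may be taken countable and locally finite; moreover the PL, smooth, and topological categories coincide in dimension three, so ``tame'' may be read as ``polygonal with respect to $T$ after a suitable subdivision.'' Inside each closed simplex of $T$ the points with rational barycentric coordinates form a countable dense set, and their union over all simplices is a countable dense subset $D \subseteq M$. A polygonal link is specified by finitely many vertices together with the combinatorial data recording how they are joined into pairwise disjoint cycles; once the vertices are required to lie in $D$, this is a finite amount of data drawn from a countable alphabet, so the collection of polygonal links with vertices in $D$ is countable.

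The heart of the argument is an approximation lemma: every tame link $L$ is equivalent to a polygonal link whose vertices lie in $D$. Represent $L$ as a polygonal link with vertices $v_1,\dots,v_n$. A sufficiently small perturbation of these vertices leaves the combinatorial structure unchanged and, by general position in the PL category, produces an ambient isotopic (hence equivalent) link: the straight-line track joining the old and new vertex positions sweeps out an ambient isotopy, provided the displacement is small enough to avoid creating self-intersections, intersections between distinct components, or contact with $\partial M$ and with any removed set. Since $D$ is dense, each $v_i$ may be replaced by a nearby point of $D$ within the required tolerance, yielding an equivalent polygonal link with vertices in $D$. Combining this with the preceding paragraph shows that the set of equivalence classes injects into a countable set, which gives countability.

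The main obstacle I anticipate is making the perturbation argument uniformly valid across all three cases. For a closed manifold the stability of PL embeddings under small vertex perturbations is standard, but for the handlebody $H_g$ one must keep the link in the interior and respect $\partial H_g$, and for a link complement the manifold is non-compact, so the triangulation is infinite; here one uses that any given compact link lies in a finite subcomplex, reducing the perturbation estimate to a compact region where a uniform tolerance exists. I would also verify that equivalence via orientation-preserving homeomorphisms is compatible with the PL ambient isotopies produced, which again follows from Moise's theorem identifying the topological and PL categories in dimension three. Cases 1 and 2 admit a shortcut: since $M$ embeds in $\mathbb{S}^3$, one may take $D$ to be the set of rational points of $\mathbb{R}^3 \subset \mathbb{S}^3$ and dispense with abstract triangulations altogether.
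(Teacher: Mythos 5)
Your proposal is correct, but it takes a genuinely different route from the paper. The paper counts links by representing them as closures of braids: for $\mathbb{S}^3$ it uses Alexander's theorem and the countability of words in $B_\infty$; for handlebodies it uses the mixed braid groups $B_{g,n}$ of H\"aring-Oldenburg--Lambropoulou; for link complements and closed oriented $3$-manifolds it uses mixed braids whose fixed part records the complement or the surgery description (Lambropoulou--Rourke). In each case the countable set of (mixed) braids surjects onto the set of link types, which gives the result; this has the side benefit of plugging directly into the Markov-type theorems the paper wants for constructing invariants. Your argument is the classical one: Moise triangulation, rational-vertex polygonal representatives, and the small-perturbation lemma (each vertex displacement is realized by a pair of elementary triangle moves, hence by an ambient isotopy supported in a ball, which in particular yields equivalence in the paper's sense of an orientation-preserving self-homeomorphism). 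This is more elementary, avoids all braid machinery, and is strictly more general: it makes no use of the hypothesis that $M$ is on the paper's list, so it proves countability of tame link types in \emph{any} second-countable $3$-manifold, which in fact bears on the paper's concluding Question 1 (for tame links the answer is negative for all $3$-manifolds). The points needing care in your write-up are exactly the ones you flag: the identification of ``tame'' with ``PL with respect to the fixed $T$'' requires the uniqueness (up to isotopy) of PL structures in dimension $3$, and the ``straight-line track'' justification of the perturbation step is best replaced by the elementary-move argument, since PL isotopy extension in codimension $2$ is delicate if quoted as a black box; with those substitutions the proof is complete.
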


\begin{corollary}\label{cor}
By Theorem \ref{mt}, the set of  equivalence classes  of tame links in $\mathbb{S}^2 \times \mathbb{S}^1$ is countable. Using a series of pairwise non-equivalent Hopf knots  from \cite{AMP}, we get that there are infinitely many of equivalence classes of essential links in $\mathbb{S}^2 \times \mathbb{S}^1$.
\end{corollary}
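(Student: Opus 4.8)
The plan is to handle the two assertions of the corollary separately, since each rests on a different earlier ingredient: the countability is an immediate specialization of Theorem~\ref{mt}, whereas the infinitude is imported from the construction of Hopf knots in \cite{AMP}.

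First I would verify that $\mathbb{S}^2\times\mathbb{S}^1$ satisfies the hypotheses of case 3) of Theorem~\ref{mt}. It is compact and without boundary, being a product of two compact boundaryless manifolds; it is connected, being a product of connected spaces; and it is orientable, being a product of orientable manifolds. Hence $M=\mathbb{S}^2\times\mathbb{S}^1$ is a closed, connected, orientable $3$-manifold, and Theorem~\ref{mt} applies directly to yield that the set of equivalence classes of tame links in $\mathbb{S}^2\times\mathbb{S}^1$ is countable. I note that the equivalence relation of Theorem~\ref{mt}, namely the existence of an orientation-preserving self-homeomorphism of $M$ carrying one link to the other, is exactly the relation used in the dynamical part of the paper, so no reconciliation of definitions is needed.

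Next I would show that infinitely many classes actually occur, and for this I would use that a Hopf knot is an essential knot in $\mathbb{S}^2\times\mathbb{S}^1$, while a knot is precisely a tame link with a single component; an essential knot is therefore an essential link. The series $\{K_n\}$ produced in \cite{AMP} consists of pairwise non-equivalent Hopf knots, meaning that for $m\neq n$ no orientation-preserving self-homeomorphism of $\mathbb{S}^2\times\mathbb{S}^1$ carries $K_m$ to $K_n$. Since link equivalence restricted to single-component links is literally knot equivalence, the $K_n$ represent infinitely many distinct equivalence classes of essential links. Combined with the first part, this shows that the number of equivalence classes of essential links, and indeed of all tame links, in $\mathbb{S}^2\times\mathbb{S}^1$ is countably infinite.

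The argument carries no serious obstacle, as both halves are applications of results already at hand; the only point requiring attention is the compatibility of the equivalence notions. One must ensure that the non-equivalence established in \cite{AMP} is with respect to at most the group of orientation-preserving homeomorphisms. If \cite{AMP} rules out all homeomorphisms, or already restricts to the orientation-preserving ones, then pairwise non-equivalence persists for the orientation-preserving group used here, because enlarging the set of admissible homeomorphisms can only merge equivalence classes and never split them. Thus the $K_n$ remain pairwise inequivalent in the present sense, and the corollary follows.
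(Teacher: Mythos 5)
Your proof is correct, but your route to the countability statement differs from the one the paper actually takes. In Section \ref{SS} the paper does not invoke case 3) of Theorem \ref{mt}: it asserts that knot theory in $\mathbb{S}^2\times\mathbb{S}^1$ ``is the same as'' knot theory in the solid torus (the handlebody $H_1$), equivalently in the complement of the unknot in $\mathbb{S}^3$, and deduces the corollary from cases 1)--2); it then also sketches an independent proof in which links in $\mathbb{S}^2\times\mathbb{S}^1$ are presented as closures of tangles, countability following from the finite generation of the tangle category, and infinitude is argued by presenting a link in $\mathbb{S}^2\times\mathbb{S}^1$ as a link in $\mathbb{S}^3$ together with an added unknot $U$ and certifying essentiality via a nonzero linking number $lk(U,K_i)$. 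Your direct verification that $\mathbb{S}^2\times\mathbb{S}^1$ is closed, connected and orientable, followed by an application of case 3), is arguably cleaner: the passage from the solid torus to $\mathbb{S}^2\times\mathbb{S}^1$ (which is $0$-surgery on the unknot) is not a literal identification of link theories, since the Markov-type theorem of \cite{LR} for surgered manifolds involves extra band moves, so solid-torus equivalence classes can merge in $\mathbb{S}^2\times\mathbb{S}^1$; this coarsening is harmless for an upper bound on cardinality, but your argument avoids having to address it at all. For the infinitude you follow the corollary's own mechanism (the pairwise non-equivalent Hopf knots of \cite{AMP}), whereas the paper's written-out alternative uses the linking-number argument; your explicit check that non-equivalence under a group of self-homeomorphisms persists when one passes to the orientation-preserving subgroup is a point the paper leaves implicit, and it is stated correctly --- the only scenario in which it would break down is if \cite{AMP} distinguished the knots merely up to isotopy, which is not the case, since the Hopf-knot invariant of Pixton diffeomorphisms is defined up to homeomorphism of the ambient manifold.
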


\medskip

The paper is organised as follows. In Section \ref{prelim1} we recall need facts from dynamical systems,  from low dimensional topology, in particular, from knot theory and braid theory.

In Section \ref{Dyn} we prove Theorem \ref{coni} and \ref{rea}, using complete classification of Morse-Smale 3-diffeomorphisms given in \cite{BGP}.

In Section \ref{Sph} we study links in 3-sphere. Using the fact that any such link is the closure of a braid and the set of braids is countable, we prove Theorem \ref{mt} for the case $M^3 = \mathbb{S}^3$. As corollary we get that the set of equivalence classes of finite spatial graphs in $\mathbb{S}^3$ is countable. Also, we remark that it is possible to prove that the set of equivalence classes of knots in $\mathbb{S}^3$ is countable, using result of 
Joyce \cite{J} and Matveev \cite{Matveev} that the fundamental quandle of a knot is almost complete invariant.

In Section \ref{Man} we are considering links in other manifolds and prove Theorem \ref{mt} using results from the paper \cite{L, HL, LR, LR1}, where links are presented as the closure  of braids and  some  analogue of Markov Theorem is proven.

In Section \ref{SS} we give another proof of Corollary \ref{cor}, using a fact that category of tangles is finitely generated. 

At the end of article we formulate a couple of questions for further research.

\section{Preliminaries}\label{prelim1}
\subsection{Dynamical systems}
A detailed explanation of the facts of this section can be found in \cite{S3}, \cite{GGZP}, \cite{GrMePo2016}.

Let $f:M^n\to M^n$ be a diffeomorphism on a smooth closed (compact without the  boundary) $n$-manifold ($n\geqslant 1$) $M^n$ with a metric $d$.  

Two diffeomorphisms $f, f':M^n\to M^n$ are said to be {\em topologically conjugate} if there is a homeomorphism $h:M^n\to M^n$ such that $fh=hf'$.

A point $x\in M^n$ is {\em wandering} for $f$ if there exists an open neighbourhood $U_x$ of $x$ for which $f^n(U_x)\cap U_x=\emptyset$ for every $n\in\mathbb N$. Otherwise $x$ is said to be {\em non-wandering}. The set of the non-wandering points of a diffeomorphism  $f$ is the {\em non-wandering set} denoted by $\Omega_f$.

If the set $\Omega_f$ is finite then each point $p\in\Omega_f$ is periodic. Denote by $m_p\in\mathbb N$ its period. Each periodic point $p$ has the corresponding {\em stable} and {\em unstable} manifolds defined by

$W^s_p=\{x\in M^n:\lim\limits_{k\to+\infty}d(f^{km_p}(x),p)=0\}$, 

$W^u_p=\{x\in M^n:\lim\limits_{k\to+\infty}d(f^{-km_p}(x),p)=0\}$.

Both the stable and the unstable manifolds are the {\em invariant manifolds}.

A periodic point $p\in\Omega_f$ is said to be {\em hyperbolic} if the absolute value of each eigenvalue of the Jacobian matrix  $\left(\frac{\partial f^{m_p}}{\partial x}\right)\vert_{p}$ is not equal to 1. If the absolute value of each eigenvalue is less (greater) than 1 then $p$ is the {\em sink} {\em (the source)}. Sinks and sources are called {\em nodes}. If a hyperbolic point is not a node then it is called a {\em saddle}.

It follows from the hyperbolic structure of a periodic point $p$ that its stable $W^s_p$ and unstable $W^u_p$ manifolds are the images of the spaces $\mathbb R^{q_p}$ and $\mathbb R^{n-q_p}$ by injective immersions where $q_p$ is the number of the eigenvalues of Jacobian matrix whose absolute value is greater than 1. A path connected component of the set $W^u_p\setminus p$ ($W^s_p\setminus p$) is called an {\em unstable (stable) separatrix} of the point $p$.

A closed $f$-invariant set $A\subset  M^n$ is called an {\em attractor} of the dynamical system $f$ if $A$ has a compact neighbourhood $U_A$ such that $f(U_A)\subset int\,U_A$ and $A=\bigcap\limits_{k\geqslant 0}f^k(U_A)$. In this case $U_A$ is called {\em trapping}  neighbourhood. 

A diffeomorphism $f:M^n\to M^n$ is said to be a {\em Morse-Smale diffeomorphism} if
\begin{enumerate}

\item the non-wandering set $\Omega_f$ consists of finite number of hyperbolic orbits;

\item for any two non-wandering points $p$, $r$ the manifolds $W^s_p$, $W^u_r$ intersect transversally.
\end{enumerate}



\subsection{Braids, links, and  tangles}\label{prelim2}
For the proof of Theorem \ref{mt} we need some facts on braids and links in 3-manifolds. In this subsection we recall them  (see, for example, \cite{Bir, CF, KT, K}).

\subsubsection{Braids} The Artin braid group $B_n$ is the group generated by $n-1$ generators $\sigma_1, \sigma_2, \ldots, \sigma_{n-1}$ which satisfy  the  relations
$$
\sigma_i \sigma_{i+1} \sigma_i =  \sigma_{i+1} \sigma_i \sigma_{i+1}~~\mbox{for}~i = 1, 2, \ldots, n-2,
$$
$$
\sigma_i \sigma_j = \sigma_j \sigma_i~~\mbox{for all}~i, j = 1, 2, \ldots, n-1~~\mbox{with}~|i - j| \geq 2.
$$

There is a unique group homomorphism $\pi \colon  B_n \to \Sigma_n$ of $B_n$ to the symmetric group $\Sigma_n$ such that $\pi(\sigma_i) = (i,i+1)$ for all 
$i = 1, 2, \ldots, n-1$.
On the other side, the formula $\iota(\sigma_i) = \sigma_{i}$ with
$i = 1, 2, \ldots, n-1$ defines an injective  group homomorphism
$$
\iota \colon B_n \to B_{n+1}.
$$

For the elements of $B_n$ it is possible to give a geometric interpretation. A geometric braid on $n \geq 1$ strings is a set $b \subset  {R}^2 \times I$, $I = [0, 1]$
formed by $n$  disjoint topological intervals called the strings of $b$ such that the projection $\mathbb{R}^2 \times I \to I$  maps each string homeomorphically onto $I$ and
$$
b \cap (\mathbb{R}^2 \times \{ 0 \}) = \{ (1, 0, 0), (2, 0, 0), \ldots, (n, 0, 0)\},
$$
$$
b \cap (\mathbb{R}^2 \times \{ 1 \}) = \{ (1, 0, 1), (2, 0, 1), \ldots, (n, 0, 1)\}.
$$

For any $\beta \in B_n$ the closed braid $\hat{\beta} \in \mathbb{S}^3$ is an oriented geometric  link. By the Alexander theorem any oriented link in $\mathbb{S}^3$
 is isotopic to a closed braid.
 
The infinite braid group $B_{\infty} = \cup_{n=1}^{\infty} B_n$ is a foundation of the knot theory in $\mathbb{S}^3$. The same role in studying  links in 3-manifolds  (see \cite{L}) plays the extended braid groups $B_{m,n}$, whose elements are called mixed braids and their  first $m$ strands forming the identity subbraid. The group $B_{m,n}$ is a subgroup of $B_{m+n}$ and has the presentation with the $m+n-1$ generators 
$$
a_1, a_2, \ldots, a_{m}, \sigma_1, \sigma_2, \ldots, \sigma_{n-1}
$$
 which satisfy  the  relations
$$
\sigma_i \sigma_{i+1} \sigma_i =  \sigma_{i+1} \sigma_i \sigma_{i+1}~~\mbox{for}~i = 1, 2, \ldots, n-2,
$$
$$
\sigma_i \sigma_j = \sigma_j \sigma_i~~\mbox{for all}~i, j = 1, 2, \ldots, n-1~~\mbox{with}~|i - j| \geq 2,
$$ 
$$
a_i \sigma_1 a_i  \sigma_1 =  \sigma_{1} a_i \sigma_{1} a_i~~\mbox{for}~1 \leq i \leq m,
$$
$$
a_i \sigma_k = \sigma_k a_i~~\mbox{for all}~k \geq 2,~~1 \leq i \leq m,
$$ 
$$
a_i (\sigma_1 a_r  \sigma_1^{-1}) =  (\sigma_1 a_r  \sigma_1^{-1}) a_i~~\mbox{for}~1 \leq i \leq m.
$$
For the geometric interpretation of $a_i$ and $\sigma_j$ see \cite[Figure 7]{L}. The structure of $B_{m,n}$ is studied in \cite{B1}.

The groups $B_{m,n}$ are the appropriate braid structures for studying knots and links in the complement of the $m$-unlink or a connected sum of $m$ lens spaces of
type $L(p, 1)$ or a handlebody of genus $m$. For these manifolds some  analogue of Markov Theorem holds (see \cite{L, HL, LR, LR1}). 

\subsubsection{Knots and links}

Recall, that {\it a knot} in a  3-manifold  $M^3$ is a homeomorphism $\gamma\colon \mathbb{S}^1\to  M^3$ or the image $K=\gamma(\mathbb{S}^1)$ of the  homeomorphism, where
$$
\mathbb{S}^1 = \{ z \in \mathbb{C} ~|~|z| = 1 \}
$$
is the standard unit circle.
 A knot $K$ in $M^3$ is said to be  {\it  essential} if  there is no a 3-ball $D^3 \subseteq M^3$ such that $K \subset D^3$.

We are considering only tame knots.
 A knot $K$ is said to be  {\it tame} if it is  equivalent to a polygonal knot; otherwise it is wild.  A {\it polygonal knot} is one which is the union of a
finite number of closed straight-line segments called edges, whose endpoints are the vertices of the knot.
Two knots $K,\,K'$ are called {\it equivalent} if there exists an orientation-preserving homeomorphism $h\colon   M^3\to M^3$ such that $h(K)=K'$. 

A {\it link} $L$ in a manifold $M^3$ is a disjoint union of finite number of knots. A link is called {\it essential (tame)} if it consists of essential (tame) knots.

\subsubsection{Tangles} Tangles are generalization of braids and links. For any integer $n > 0$ we put $[n] = \{ 1, 2, \ldots, n \}$ and assume that $[0]$ is the empty set.
If $k$ and $l$ are nonnegative integers. A tangle $L$ of type $(k,l)$ is the union of a finite number of pairwise disjoint simple oriented polygonal arcs in $X = \mathbb{R}^2 \times I$ such that the boundary $\partial L$ of $L$ satisfies the condition
$$
\partial L = L \cap (\mathbb{R}^2 \times \{ 0, 1 \}) = ([k] \times \{ 0 \} \times \{ 0 \}) \cup ([l] \times \{ 0 \} \times \{ 1 \}). 
$$
Observe that a link in $\mathbb{R}^2 \times I$ is a tangle of type $(0,0)$. As for links on the set of tangles it is possible to define an equivalent relation. More accurately, two tangles $L$ and $L'$ are equivalent an isotopy $h$ of $X$ such that $h(1, L) = L'$, where an isotopy of $X$ is a piecewise-linear map $h \colon I \times X \to X$ such that for all $t \in I$, the mapping $h(t, -)$ is a homeomorphism of $X$ restricting to the identity map on the boundary $\partial X = \mathbb{R}^2 \times \{ 0, 1 \}$ and such that $h(0, -)$ is the identity of $X$.

\bigskip


\section{Dynamic of class $G$ diffeomorphisms} \label{Dyn}
\subsection{Diffeomorphism scheme $f\in G$}
In this section we introduce the notion of a scheme of a diffeomorphism $f\in G$ whose equivalence class is a complete invariant of the topological conjugacy of the diffeomorphism \cite{BGP}. 

Let $V_{\alpha}=W^u_{\alpha}\setminus\alpha$. Due to the hyperbolicity of the source $\alpha$, there is a diffeomorphism $\psi_{\alpha}\colon V_{\alpha}\to\mathbb R^3\setminus O$, which conjugates the diffeomorphisms $f$ and $h^{-1}$. Let $p_{\alpha}=p\psi_{\alpha}: V_{\alpha}\to\mathbb S^{2}\times\mathbb S^1$ and $\mathcal T_f=p_{\alpha}(W^s_{\Omega_1}\cap V_\alpha)$. Pair $$S_f=(\mathbb S^{2}\times\mathbb S^1,\mathcal T_f)$$ is called a {\it scheme} of the diffeomorphism $f\in G$. According to \cite{GrMePo2016}, the set $\mathcal T_f$ consists of 
homotopically non-trivial two-dimensional tori, smoothly embedded into $\mathbb S^2\times\mathbb S^1$ (see Fig. \ref{Pi7}).
\begin{figure}[h!]\center{\includegraphics
		[width=0.45\linewidth]{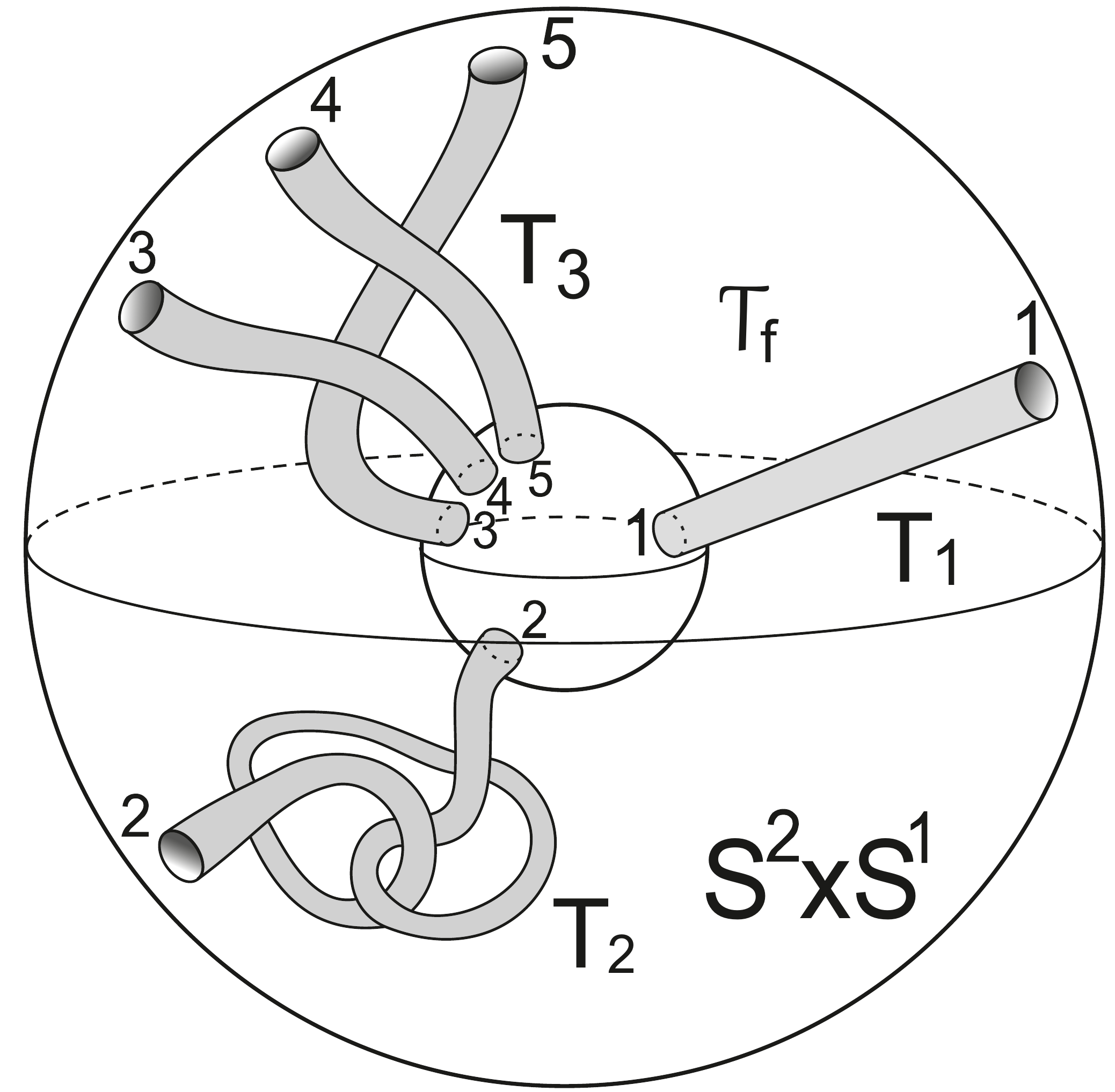}}\caption{Scheme $S_f$  of the diffeomorphism $f\in G$ from figure  \ref{pi1}}\label{Pi7}
\end{figure}

Two schemes $S_f,\,S_{f'}$ are called {\it equivalent} if there is a homeomorphism $\psi:\mathbb S^2\times\mathbb S^1\to \mathbb S^2\times\mathbb S^1$ such that $\psi(\mathcal T_f)=\mathcal T_{f'}$.
\begin{proposition}[\cite{BGP}, Theorem 1]\label{duke} Diffeomorphisms $f,\,f'\in G$ are topologically conjugate if and only if their schemes $S_f,\,S_{f'}$ are equivalent.
\end{proposition}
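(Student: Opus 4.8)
Since the statement is quoted from \cite{BGP}, my aim is to indicate how the general scheme--classification method specializes to the class $G$. The key structural fact, recalled above, is that $V_\alpha = W^u_\alpha \setminus \alpha$ is diffeomorphic to $\mathbb{R}^3 \setminus O$ and that $p_\alpha$ realizes $V_\alpha$ as the universal cover $\mathbb{S}^2 \times \mathbb{R} \to \mathbb{S}^2 \times \mathbb{S}^1$, the generator of the deck group $\mathbb{Z}$ being the restriction $f|_{V_\alpha}$, which $\psi_\alpha$ conjugates to the expansion $h^{-1}$. Thus the scheme $(\mathbb{S}^2 \times \mathbb{S}^1, \mathcal{T}_f)$ records, downstairs, the $f$-action on the repeller basin together with the position of the saddle stable manifolds $W^s_{\Omega_1}$ inside it.

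Necessity is immediate. If $h\colon\mathbb{S}^3\to\mathbb{S}^3$ is a conjugacy, $f h = h f'$, then $h$ carries the repeller of $f'$ to the repeller of $f$, so $h(\alpha')=\alpha$ and $h(V_{\alpha'})=V_\alpha$, and it maps saddle stable manifolds to saddle stable manifolds, whence $h(W^s_{\Omega_1}\cap V_{\alpha'}) = W^s_{\Omega_1}\cap V_\alpha$. Because $h$ conjugates $f|_{V_{\alpha'}}$ to $f|_{V_\alpha}$, it descends through $p_{\alpha'}$ and $p_\alpha$ to a homeomorphism $\psi\colon\mathbb{S}^2\times\mathbb{S}^1\to\mathbb{S}^2\times\mathbb{S}^1$ with $\psi(\mathcal{T}_{f'})=\mathcal{T}_f$, which is exactly an equivalence of schemes.

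The substance is sufficiency, which I would carry out in three stages. First, lift the scheme equivalence: since $V_{\alpha'}$ is simply connected, the map $\psi\circ p_{\alpha'}$ lifts through $p_\alpha$ to a homeomorphism $H\colon V_{\alpha'}\to V_\alpha$ with $p_\alpha H = \psi p_{\alpha'}$ and $H(W^s_{\Omega_1}\cap V_{\alpha'}) = W^s_{\Omega_1}\cap V_\alpha$. This lift conjugates the deck groups, so $H f' = f^{\varepsilon} H$, where the sign $\varepsilon\in\{+1,-1\}$ is determined by the action of $\psi$ on $\pi_1(\mathbb{S}^2\times\mathbb{S}^1)\cong\mathbb{Z}$; one must check $\varepsilon=+1$, which holds because $\psi$ respects the co-orientation of the homotopically non-trivial tori $\mathcal{T}_f$ induced by the expanding direction of $f$, so that $H$ is a genuine equivariant conjugacy between $f'$ and $f$ on the repeller basins. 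Second, extend $H$ across the source by $H(\alpha')=\alpha$: near either source the dynamics is topologically the standard expansion, and the equivariance of $H$ forces continuity at the added point, yielding a conjugacy on all of $W^u_{\alpha'}\to W^u_\alpha$.

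Third, and this is where I expect the main obstacle, I would propagate the conjugacy from the open dense repeller basin $W^u_\alpha$ to its complement, the dual attractor $A=\mathbb{S}^3\setminus W^u_\alpha$, consisting of the sink $\omega$, the saddle orbits, and their one-dimensional unstable separatrices. The difficulty is that extending $H$ continuously over $A$ requires matching, for $f$ and $f'$, the way orbits accumulate on the separatrices and on the sink; the already-secured correspondence of the stable manifolds $W^s_{\Omega_1}$, equivalently of the balls $B_i$, fixes the embedded position of $A$, and the local linear models at the saddles and at $\omega$ supply an extension near each stratum. The real work is to choose these local extensions on a compatible system of neighborhoods of $A$ so that they agree on overlaps and glue to a global homeomorphism commuting with the dynamics. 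Here the defining hypothesis of $G$, that the spheres $S_1,\dots,S_k$ bound balls $B_1,\dots,B_k$ with pairwise disjoint interiors, is essential: it makes the separatrix configuration a controlled disjoint family and renders the gluing unobstructed. This compatibility-and-uniqueness step is the technical heart of Theorem 1 of \cite{BGP}.
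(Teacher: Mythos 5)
First, a point of order: the paper does not prove this proposition at all. It is imported verbatim from \cite{BGP} (Theorem 1) and used as a black box; the paper's own contribution in Section \ref{Dyn} is Lemma \ref{bas}, which reduces equivalence of links to equivalence of schemes so that Theorem \ref{coni} follows from the cited result. So there is no proof in the paper to compare yours against, and the comparison can only be with the cited literature. Within that frame, your necessity argument is correct and standard: a conjugacy preserves the source, its punctured basin $V_{\alpha}$, and the saddle stable manifolds, and it commutes with the deck actions generated by $f|_{V_\alpha}$ and $f'|_{V_{\alpha'}}$, hence descends to the orbit spaces. The first stage of your sufficiency argument (lifting $\psi\, p_{\alpha'}$ through $p_\alpha$ to a homeomorphism $H$ with $H f' = f^{\varepsilon} H$, $\varepsilon=\pm 1$) is also sound covering-space theory.

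However, as a proof the proposal has two genuine gaps. First, the claim $\varepsilon=+1$: with the definition of scheme equivalence as stated here (any homeomorphism $\psi$ with $\psi(\mathcal T_f)=\mathcal T_{f'}$, with no condition on orientations or on the induced action on $\pi_1(\mathbb S^2\times\mathbb S^1)\cong\mathbb Z$), nothing visibly rules out $\psi_*=-1$. Your appeal to a ``co-orientation induced by the expanding direction'' is not an argument: the tori $\mathcal T_f$ are just subsets of $\mathbb S^2\times\mathbb S^1$ and carry no canonical memory of the dynamics, and one would have to prove that any admissible $\psi$ preserves (or can be corrected to preserve) the generator of $\pi_1$. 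This matters, because if $\varepsilon=-1$ then $H$ interchanges the two ends of $V_{\alpha}\cong\mathbb S^2\times\mathbb R$ (it sends points converging to $\alpha'$ to orbits escaping toward the attractor side), so it cannot be extended over the sources and your construction breaks down at stage two. Second, and more fundamentally, your third stage --- extending the conjugacy from the basin of the source over the attractor (the sink, the saddle orbits, and their one-dimensional unstable separatrices) --- is not an argument but a description of what remains to be done; you say so yourself. That compatible-local-extensions step, which requires modifying $H$ near $W^s_{\Omega_1}$ to respect local product structures at the saddles and then gluing at the sinks, is precisely the substance of Theorem 1 of \cite{BGP}. So what you have is a correct outline of the standard strategy with its hardest step left open --- a reasonable gloss on a result the paper itself only cites, but not a proof.
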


Thus, to prove theorem \ref{coni}, it is sufficient to show that for diffeomorphisms of the set $G$, the equivalence of links is tantamount to the equivalence of schemes.

\subsection{The equivalence of the links is tantamount to the equivalence of the schemes}
In this section we will prove a result (lemma \ref{bas}), directly from which the theorem \ref{coni} will follow.

\begin{lemma}\label{bas} The links of $\mathcal K_f,\,\mathcal K_{f'}$ of diffeomorphisms $f,\,f'\in G$ are equivalent if and only if their schemes $S_f,\,S_{f'}$ are equivalent.
\end{lemma}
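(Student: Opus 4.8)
The plan is to reduce the statement to a single geometric comparison of the link $\mathcal K_f$ with the scheme $\mathcal T_f$, carried out inside the orbit space of the trajectories running from the source $\alpha$ to the sink $\omega$. Set $A=W^s_\omega\cap W^u_\alpha$, the set of wandering points whose forward orbit tends to $\omega$ and whose backward orbit tends to $\alpha$. The action of $f$ on $A$ is free, and $A\subset V_\omega$, $A\subset V_\alpha$, so $p_\omega$ and $p_\alpha$ both restrict to homeomorphisms of $A/f$ onto their images. Because $V_\omega\setminus A=W^u_{\Omega_1}\cap V_\omega$ and $V_\alpha\setminus A=W^s_{\Omega_1}\cap V_\alpha$, these images are exactly the complements: $p_\omega(A)=(\mathbb S^2\times\mathbb S^1)\setminus\mathcal K_f$ and $p_\alpha(A)=(\mathbb S^2\times\mathbb S^1)\setminus\mathcal T_f$. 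Composing, I obtain a canonical homeomorphism $\Phi_f=p_\alpha\circ(p_\omega|_A)^{-1}$ from the link complement onto the scheme complement, and likewise $\Phi_{f'}$ for $f'$. All of the remaining work is to understand how $\Phi_f$ behaves at the ends and to transport equivalences across it.

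Next I would pin down the correspondence at the ends by a local analysis at each saddle. Linearising $f^{m_i}$ at $\sigma_i$ as $(x,y,z)\mapsto(\lambda x,\mu y,\nu z)$ with $|\lambda|>1>|\mu|,|\nu|$, the unstable separatrices are the two rays of the $x$-axis and the stable manifold is the punctured $yz$-plane; inside $A$ these are separated by the stable manifold, which plays the role of a wall between the two separatrices. Passing to the orbit space, this standard model shows that the ends of $A/f$ abutting $\mathcal K_f$ (the unstable separatrices) and the ends abutting $\mathcal T_f$ (the stable manifold) are glued to the saddle in a fixed, model way. Consequently each of the pairs $(\mathbb S^2\times\mathbb S^1,\mathcal K_f)$ and $(\mathbb S^2\times\mathbb S^1,\mathcal T_f)$ is recovered from the common complement $A/f$ by a canonical filling of its torus ends, the filling data being dictated by the local model; this identifies $\mathcal T_f$, up to isotopy, with boundary tori of a regular neighbourhood of $\mathcal K_f$ associated to the saddles, and exhibits a bijection between the two kinds of data that remembers which separatrices share a saddle.

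With the correspondence in hand, I would prove the two implications. For scheme equivalence $\Rightarrow$ link equivalence I can take a shortcut: a topological conjugacy $h$ with $fh=hf'$ carries invariant manifolds to invariant manifolds, hence descends to orbit-space homeomorphisms taking $\mathcal K_{f'}$ to $\mathcal K_f$ and $\mathcal T_{f'}$ to $\mathcal T_f$ simultaneously; since Proposition~\ref{duke} gives scheme equivalence $\Rightarrow$ conjugacy, we obtain scheme equivalence $\Rightarrow$ conjugacy $\Rightarrow$ link equivalence. For the converse, given an orientation-preserving homeomorphism $\varphi$ with $\varphi(\mathcal K_f)=\mathcal K_{f'}$, I restrict it to the complements and conjugate by $\Phi_f,\Phi_{f'}$ to obtain a homeomorphism $\Phi_{f'}\varphi\Phi_f^{-1}\colon (\mathbb S^2\times\mathbb S^1)\setminus\mathcal T_f\to(\mathbb S^2\times\mathbb S^1)\setminus\mathcal T_{f'}$. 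It then remains to show that this map extends across the tori to a homeomorphism carrying $\mathcal T_f$ to $\mathcal T_{f'}$, which reduces to checking that it is tame at the torus ends and respects the canonical filling data from the previous step.

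I expect this last extension to be the main obstacle, and it is where the dynamical input is essential. An arbitrary homeomorphism of the pair $(\mathbb S^2\times\mathbb S^1,\mathcal K_f)$ need not a priori respect either the meridional slopes that reconstruct $\mathbb S^2\times\mathbb S^1$ from the link complement or the pairing of separatrices belonging to a common saddle; both must be shown to be intrinsic. The first is handled by the uniqueness of the Dehn fillings producing a closed manifold from a knot complement, which forces meridians to meridians. The harder point is the second: I must characterise the scheme tori $\mathcal T_f$ intrinsically inside the link complement, as incompressible, boundary-parallel essential tori cobounding the regular neighbourhoods of the paired separatrices, so that any homeomorphism of complements can be isotoped to carry $\mathcal T_f$ to $\mathcal T_{f'}$. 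Establishing this naturality, together with the routine collar extension across the tori and a check that orientations match the conventions in the two definitions, completes the argument.
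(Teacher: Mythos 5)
Your plan for the direction (schemes equivalent $\Rightarrow$ links equivalent) via Proposition~\ref{duke} is legitimate and not circular, since that proposition is quoted from \cite{BGP}; it is in fact a shortcut the paper does not use (the paper handles both directions with one construction). The gap is in the other direction, and it starts with a false identification. It is true that $p_\omega(A)=(\mathbb S^2\times\mathbb S^1)\setminus\mathcal K_f$, but $p_\alpha(A)\neq(\mathbb S^2\times\mathbb S^1)\setminus\mathcal T_f$: the set $V_\alpha\setminus A$ consists of $W^s_{\Omega_1}\cap V_\alpha$ \emph{together with the basins of the sinks other than $\omega$}, and such sinks necessarily exist, because the inner unstable separatrix of each saddle $\sigma_i$ cannot cross the invariant sphere $S_i$ and must converge to a sink inside the ball $B_i$. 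These basins meet $W^u_\alpha$ in nonempty open sets, so $\Phi_f$ maps the link complement onto only the ``outer'' part of the scheme complement; on the far side of each torus $T_i$ sits an open region that is not in the image of $\Phi_f$ at all. Consequently $\Phi_{f'}\varphi\Phi_f^{-1}$ is not a map between scheme complements, and the extension problem is not ``across the tori $\mathcal T_f$'' but across the compact regions they bound. That these regions are solid tori is precisely where the defining hypothesis of the class $G$ (the spheres $S_i$ bound balls with pairwise disjoint interiors) enters the paper's proof; your plan never invokes this hypothesis, and without it the statement would fail.

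The second, related, gap is that you misplace the meridian difficulty. Since $\varphi$ is given as a homeomorphism of pairs, it automatically carries a tubular neighbourhood $U(K_i)$ onto a tubular neighbourhood of $\varphi(K_i)$ and meridian disks to meridian disks (up to isotopy); no uniqueness-of-Dehn-filling input is needed on the knot side, and such uniqueness is in any case false in this generality (a solid torus admits infinitely many fillings yielding $\mathbb S^3$). What actually must be proved is that the canonical identification carries the meridian $\mu_i$ of $\partial U(K_i)$ to the meridian of the solid torus $W_i$ bounded on the scheme side by the image torus. The paper does this by the winding-number computation $\langle\mu_i\rangle=0$, $\langle\lambda_i\rangle=m_i$: the identification preserves winding numbers, and on $\partial W_i$ the only essential slope with zero winding is the meridian of $W_i$, after which the standard fact that a boundary homeomorphism matching meridians extends over solid tori finishes the extension. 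Your local linearisation at the saddles cannot substitute for this, because which curve bounds a disk on the far side of $T_i$ is a global question about the interior of $B_i$, not a local question at the saddle. Finally, the ``pairing of separatrices belonging to a common saddle'' you propose to control with incompressible-torus arguments is a non-issue: in class $G$ exactly one unstable separatrix of each saddle orbit lies in $V_\omega$, so there is one knot and one torus per saddle orbit, and the correspondence between boundary tori is automatic once $\varphi$ is isotoped to match tubular neighbourhoods.
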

\begin{proof} For $i\in\{1,\dots,k\}$ let $K_i=p(W^u_{\sigma_i}\setminus\sigma_i),\,T_i=p_\alpha(W^s_{\sigma_i}\setminus\sigma_i)$. Then $$\mathcal K_f=\bigsqcup\limits_{i=1}^kK_i,\,\,\mathcal T_f=\bigsqcup\limits_{i=1}^kT_i.$$
Denote by $U(K_i)$ a tubular neighbourhood of the knot $K_i$, homeomorphic to the solid torus. Then $U(T_i)=q_{_f}(p_f^{-1}(U(K_i)))\cup T_i$ be a one-sided tubular neighbourhood of the torus $T_i$. Let 
	$$U(\mathcal K_f)=\bigsqcup\limits_{i=1}^kU(K_i),\,\,U(\mathcal T_f)=\bigsqcup\limits_{i=1}^kU(T_i),$$  
	$$V=\mathbb S^2\times\mathbb S^1\setminus{\rm int}\,U(\mathcal K_f),\,\,W=\mathbb S^2\times\mathbb S^1\setminus{\rm int}\,U(\mathcal T_f).$$
By the construction the map $\xi=p_\alpha p_{\omega}^{-1}|_{V}:V\to W$ is a homeomorphism. We show that the homeomorphism $\xi$ continues to the homeomorphism $\xi:\mathbb S^2\times\mathbb S^1\to\mathbb S^2\times\mathbb S^1$. 
	
For any closed curve $c\subset\mathbb S^2\times\mathbb S^1$ we will denote by $\langle c\rangle$ the winding number of the loop $c$ around the generator of the  fundamental group $\mathbb S^2\times\mathbb S^1$. Then the torus $\partial V_i=\partial V\cap U(K_i)$ has generators $\lambda_i,\mu_i$ such that $$\langle\lambda_i\rangle=m_i,\,\langle\mu_i\rangle=0.$$ From the construction of the homeomorphism $\xi$, it follows that $\xi(\lambda_i),\xi(\mu_i)$ are the generators of the torus $\partial W_i=\partial W\cap U(T_i)$, while $$\langle\xi(\lambda_i)\rangle=m_i,\,\langle\xi(\mu_i)\rangle=0.$$  
	From the conditions imposed on the class $G$, it follows that each torus $\partial W_i$ bounds the solid torus $W_i$ in $W$. Then  
	$\xi(\mu_i)$ is the meridian of the solid torus $W_i$, and the homeomorphism $\xi|_{\partial V_i}:\partial V_i\to\partial W_i$ translates the meridian of the solid torus $U(K_i)$ to the meridian of the solid torus $W_i$, by virtue of which it continues to the solid tori \cite{Ro}.
	
	Thus, there is a homeomorphism $\xi:\mathbb S^2\times\mathbb S^1\to\mathbb S^2\times\mathbb S^1$ such that $\xi(U(K_i))=W_i,\,i\in\{1,\dots,k\}$. 
	
	We introduce similar notation with a prime for the diffeomorphism $f'\in G$. Without loss of the generality, we assume that $U(\mathcal K_{f'})=\varphi(U(\mathcal K_f))$. Let $\psi=\xi'\varphi\xi^{-1}:\mathbb S^2\times\mathbb S^1\to\mathbb S^2\times\mathbb S^1$.	Then $\psi(W_i)=W'_i$ and, without loss of the generality, we can assume that $\psi(T_i)=T'_i$. 
\end{proof}

\subsection{Realization of a link by a diffeomorphism}
In this section, we prove Theorem~\ref{rea}, namely, we realize any  link $\mathcal K\subset\mathbb S^2\times\mathbb S^1$ consisting of essential knots 
$$\mathcal K=K_1\sqcup\dots\sqcup K_k
$$ 
by a diffeomorphism $f\in G$. 

Let $\langle K_i\rangle=m_i$. Then the set $\bar K_i=p^{-1}(K_i)$ consists of $h$-invariant union of arcs 
$$\bar K_i=\bar K^0_i\sqcup h(\bar K^0_i) \sqcup \dots\sqcup h^{m_i-1}(\bar K^0_i).
$$
Let $U(K_i)\subset\mathbb S^2\times\mathbb S^1$ be a tubular neighbourhood of the knot $K_i$. Then $U(\bar K_i)=p^{-1}(U(K_i))$ is an $h$-invariant neighbourhood of the arcs $\bar K_i$ consisting of $m_i$ connected components $U(\bar K^0_i)\sqcup\dots\sqcup h^{m_i-1}(U(\bar K^0_i))$, each of which is diffeomorphic to $\mathbb{D}^{2}\times\mathbb R^1$.

Let {$C=\{(x_1,x_2,x_3)\in\mathbb R^3~:~x_2^2+x_{3}^2\leqslant 4\}$} and let a flow $g^t:C\to C$ is defined as $$g^t(x_1,x_2,x_3)=(x_1+t,x_2,x_3).$$ Then there is a diffeomorphism ${\zeta}_i:{U(\bar K^0_i)}\to C$, which conjugates the diffeomorphisms $h^{m_i}\vert_{{U(\bar K^0_i)}}$ and $g=g^1|_C$. Let's define a flow $\phi^t$ on $C$ by the following formulas:  
$$\begin{cases}
	\dot{x}_1=\begin{cases}1-\frac{1}{9}(x_1^2+x_2^2+x_3^2-4)^2, \quad x_1^2+x_2^2+x_3^2 \leqslant 4 \cr
		1, \quad x_1^2+x_2^2+x_3^2 > 4
	\end{cases}\cr
	\dot{x}_2=\begin{cases}
		\frac{x_2}{2}\big(\sin\big(\frac{\pi}{2}\big(x_1^2+x_2^2+x_3^2-3\big)\big)-1\big), \quad 2<x_1^2+x_2^2+x_3^2\leqslant 4\cr
		-x_2,\quad \quad x_1^2+x_2^2+x_3^2\leqslant 2\cr
		0, \quad x_1^2+x_2^2+x_3^2 > 4
	\end{cases}\cr
	\dot{x}_3=\begin{cases}
		\frac{x_3}{2}\big(\sin\big(\frac{\pi}{2}\big(x_1^2+x_2^2+x_3^2-3\big)\big)-1\big), \quad 2<x_1^2+x_2^2+x_3^2\leqslant 4\cr
		-x_3,\quad \quad x_1^2+x_2^2+x_3^2\leqslant 2\cr
		0, \quad x_1^2+x_2^2+x_3^2 > 4.
	\end{cases}
\end{cases}$$ 
\begin{figure}[h!]\centerline{\includegraphics	[width=8 true cm, height=5.5 true cm]{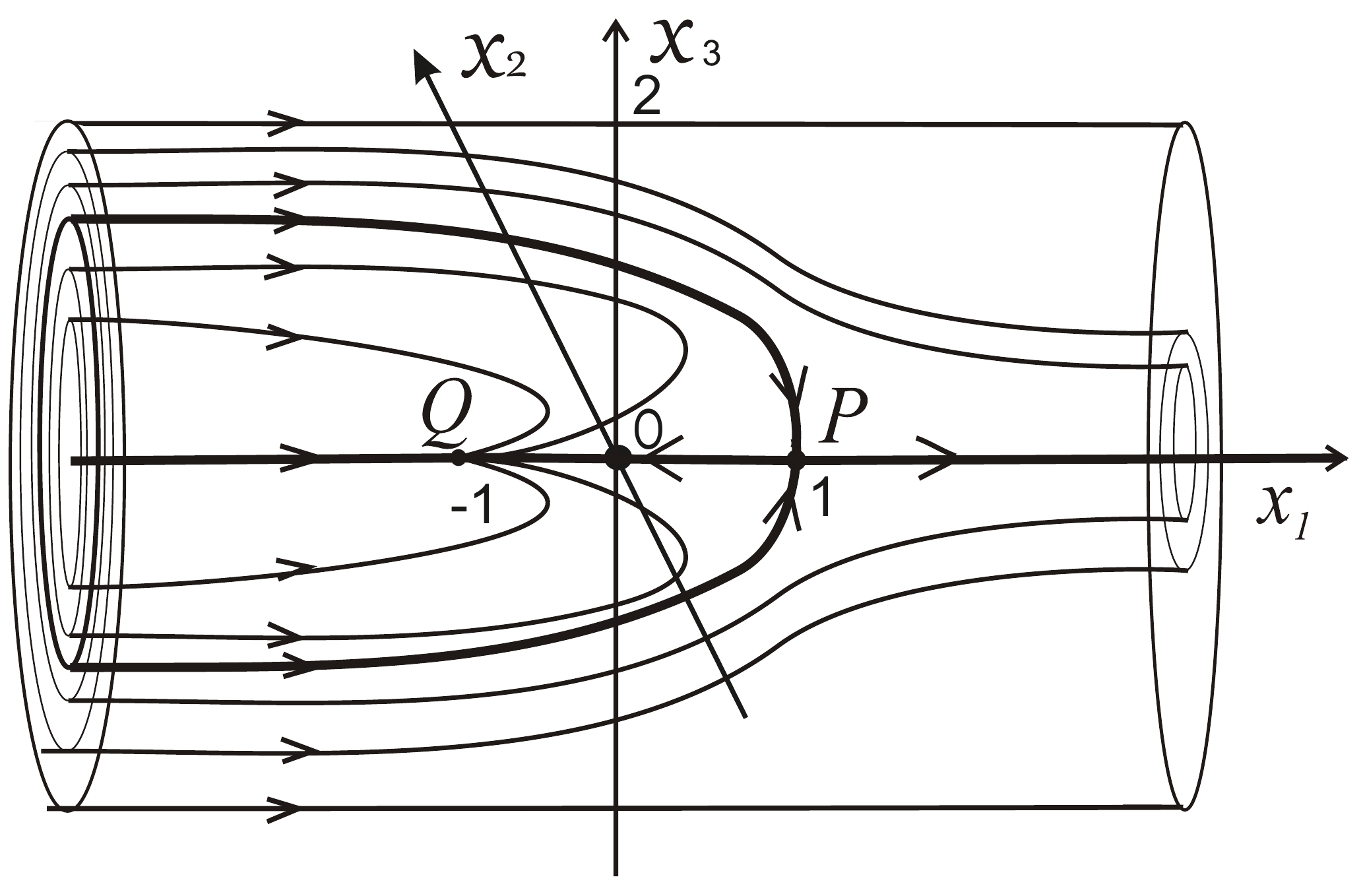}}\caption{\small Trajectories of the flow $\phi^t$}\label{cherry}	
\end{figure}
{By the construction, the diffeomorphism $\phi=\phi^1$ has two fixed points: the saddle $P(1,0,0)$ and the sink $Q(-1,0,0)$ (see Fig. \ref{cherry}), both hyperbolic. One unstable separatrix of the saddle $P$ coincides with the open interval $\left\{(x_1,x_2,x_3)\in\mathbb R^3:\,|x_1|<1,\,x_2=x_3=0\right\}$, belonging to the basin of the sink $Q$, and the other is the ray $\left\{(x_1,x_2,x_3)\in\mathbb R^3:\,x_1>1,\,x_2=x_3=0\right\}$. 
	Note that $\phi$ coincides with the diffeomorphism $g=g^1$ outside the ball $\{(x_1,x_2,x_3)\in C:x_1^2+x_2^2+x_3^2\leqslant 4\}$}. 

Define a diffeomorphism $\bar f_{\mathcal K}:\mathbb R^3\to\mathbb R^3$ in such a way that $\bar{f}_{\mathcal K}$ coincides with $h$ outside $U(\bar K_1)\cup\dots\cup U(\bar K_k)$ and coincides with a diffeomorphism $h_i:U(\bar K_i)\to U(\bar K_i)$, given by the formula $$h_i(h^j(x))=\begin{cases}h(h^j(x)),\,j\in\{0,\dots,m_i-2\},\,x\in U(\bar K^0_i);\cr {\zeta}_i^{-1}(\phi({\zeta}_i(x))),\,j=m_i-1,\,x\in U(\bar K^0_i).\end{cases}$$ 
Then $\bar f_{\mathcal K}$ has in $U(\bar K_i)$ two periodic orbits of the period $m_i$: of the sink $\omega_i={\zeta}_i^{-1}(Q)$ and of  the saddle $\sigma_i={\zeta}_i^{-1}(P)$, both hyperbolic (see Fig. \ref{af2}).
\begin{figure}[h!]\centerline{\includegraphics		[width=9 true cm, height=9 true cm]{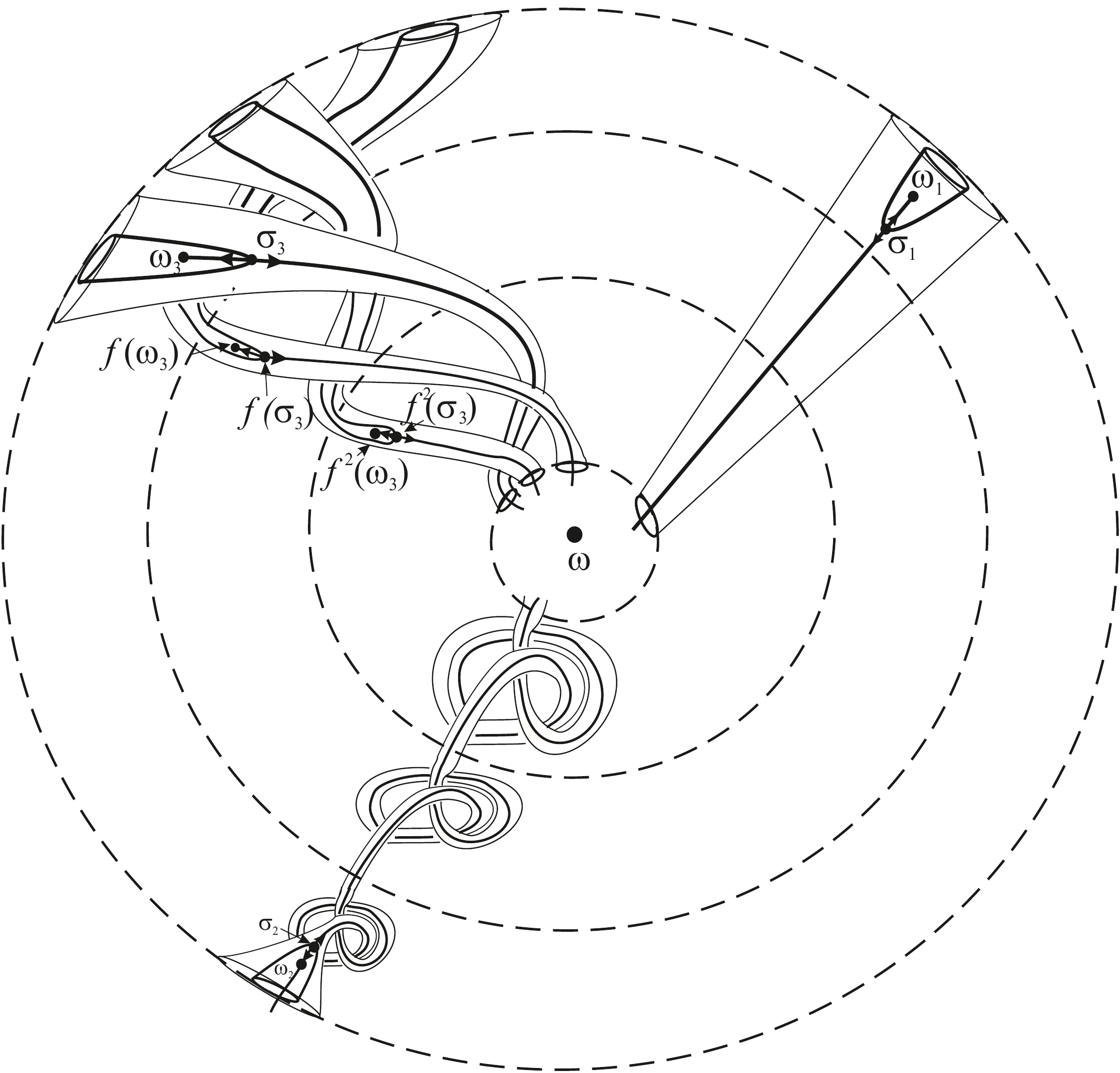}}\caption{\small Phase portrait of the diffeomorphism $\bar f_{\mathcal K}$}\label{af2}\end{figure}
{Now we project the dynamics onto the $3$-sphere. Denote by $N(0,0, 0, 1)$ the north pole of the sphere $\mathbb S^3=\{{\bf x}=(x_1,x_2,x_3,x_{4}):\,||{\bf x}||=1\}$. For each point ${\bf x}\in(\mathbb{S}^3\setminus\{N\})$ there is a single line passing through $N$ and ${\bf x}$ in $\mathbb R^{4}$ and this line intersects $\mathbb R^3$ at a single point $\vartheta_+({\bf x})$. The stereographic projection of the point ${\bf x}$ is the point $\vartheta_+({\bf x})$. It is easy to check that $$\vartheta_+(x_1,x_2,x_3,x_{4})=\left(\frac{x_1}{1-x_{4}},
	\frac{x_{2}}{1-x_{4}},\frac{x_3}{1-x_{4}}\right).$$ Thus, the stereographic projection $\vartheta_+:\mathbb S^3\setminus\{N\}\to\mathbb R^3$ is a diffeomorphism.}
{By the construction, the diffeomorphism $\bar{f}_{\mathcal K}$ coincides with $h$ in some neighbourhood of the initial point $O$ and the infinite point, therefore, it induces on $\mathbb{S}^3$ the Morse-Smale diffeomorphism $$f_{{\mathcal K}}({\bf x})=\begin{cases}\vartheta_+^{-1}(\bar f_{\mathcal K}(\vartheta_+({\bf x}))),~{\bf x}\neq N;\cr N,~{\bf x}=N\end{cases}.$$} It follows directly from the construction that the diffeomorphism $f_{{\mathcal K}}$ belongs to the set $G$, let's call such diffeomorphisms {\it model}. The following statement follows directly from theorem \ref{coni}.
\begin{corollary} $ $
	\begin{itemize}
\item Any diffeomorphism $f\in G$ is topologically conjugate to some model diffeomorphism $f_{\mathcal K}$.
		\item Model diffeomorphisms $f_{\mathcal K},f_{\mathcal K'}$ are topologically conjugate if and only if their links $\mathcal K,\,\mathcal K'$ are equivalent.
	\end{itemize} \label{Pidy}
\end{corollary}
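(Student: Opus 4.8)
The plan is to obtain both assertions as bookkeeping consequences of Theorem~\ref{coni}, once the defining feature of the model diffeomorphisms is isolated: namely, that $f_{\mathcal K}$ reproduces $\mathcal K$ as its own link. Concretely, the single fact I would extract from the construction preceding the corollary is the realization identity that $\mathcal K_{f_{\mathcal K}}$ is equivalent to $\mathcal K$. This is exactly the content of the proof of Theorem~\ref{rea}: each saddle orbit $\sigma_i={\zeta}_i^{-1}(P)$ carries a one-dimensional unstable manifold, one of whose separatrices escapes the ball $\{x_1^2+x_2^2+x_3^2\leqslant 4\}$ into the region where $\phi$ agrees with $g=g^1$, hence flows to the sink $\omega$ and lies in $V_\omega$ (the other separatrix runs to the local sink $\omega_i={\zeta}_i^{-1}(Q)$ and so does not meet $V_\omega$); pushing the escaping separatrices forward by $p_\omega$ and taking the disjoint union over the $k$ orbits returns exactly $K_1\sqcup\dots\sqcup K_k=\mathcal K$.

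With this in hand the first bullet is immediate. Given an arbitrary $f\in G$, I would set $\mathcal K=\mathcal K_f$ and form the model diffeomorphism $f_{\mathcal K}$; then $\mathcal K_{f_{\mathcal K}}$ is equivalent to $\mathcal K=\mathcal K_f$, so $f$ and $f_{\mathcal K}$ have equivalent links, and Theorem~\ref{coni} yields that they are topologically conjugate.

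For the second bullet I would apply Theorem~\ref{coni} directly to the pair of model diffeomorphisms $f_{\mathcal K}$ and $f_{\mathcal K'}$: they are topologically conjugate if and only if $\mathcal K_{f_{\mathcal K}}$ and $\mathcal K_{f_{\mathcal K'}}$ are equivalent. Since $\mathcal K_{f_{\mathcal K}}$ is equivalent to $\mathcal K$ and $\mathcal K_{f_{\mathcal K'}}$ is equivalent to $\mathcal K'$, transitivity of link equivalence in $\mathbb S^2\times\mathbb S^1$ converts this into the statement that $\mathcal K$ and $\mathcal K'$ are equivalent, which is precisely the claim.

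The only step with genuine geometric content is the realization identity that $\mathcal K_{f_{\mathcal K}}$ is equivalent to $\mathcal K$, and I expect that to be the point needing the most care, since it requires tracking the escaping separatrix of each $\sigma_i$ through the local conjugacy ${\zeta}_i$, the projection $p_\omega$, and the stereographic chart $\vartheta_+$, and verifying that it closes up into a knot winding $m_i$ times that coincides with $K_i$. Everything after that is a formal manipulation of the equivalence relation on links, so no further obstacle is anticipated; in this sense the corollary is indeed a direct consequence of Theorem~\ref{coni}.
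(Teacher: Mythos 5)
Your proposal is correct and follows the paper's own route: the paper states the corollary as a direct consequence of Theorem~\ref{coni}, given the construction of $f_{\mathcal K}$ in the proof of Theorem~\ref{rea}, and your argument simply makes explicit the realization identity $\mathcal K_{f_{\mathcal K}}\sim\mathcal K$ plus the bookkeeping with transitivity of link equivalence. You also correctly identify that the only step with real geometric content is that realization identity, which is exactly the burden of Theorem~\ref{rea} and not an extra gap in the corollary itself.
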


\bigskip


\section{Counting of links and spatial graphs  in $\mathbb{S}^3$} \label{Sph}

In this section we prove Theorem  \ref{mt} for the case $M^3 = \mathbb{S}^3$. For further references we formulate it as proposition.

\begin{proposition} \label{S3}
The  set of  equivalence classes of tame links in $S^3$ is countable.
\end{proposition}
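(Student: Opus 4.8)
The plan is to exploit the Alexander theorem mentioned in the preliminaries: every oriented tame link in $\mathbb{S}^3$ is isotopic to the closure $\widehat{\beta}$ of some braid $\beta \in B_n$ for some $n \geq 1$. Since every tame link is equivalent to a polygonal (hence piecewise-linear, hence smoothable) link, and Alexander's theorem applies to all such links, this gives a surjection from the set of all braids $\bigsqcup_{n \geq 1} B_n = B_\infty$ onto the set of isotopy classes of oriented tame links in $\mathbb{S}^3$. The first step is therefore to observe that each Artin braid group $B_n$ is finitely generated (by the $n-1$ generators $\sigma_1, \dots, \sigma_{n-1}$), hence countable, being a quotient of the countable free group on finitely many letters; a countable union $\bigcup_{n=1}^\infty B_n$ of countable sets is countable.

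Next I would push this counting through the assignment $\beta \mapsto \widehat{\beta}$. Because Alexander's theorem guarantees that the map $B_\infty \to \{\text{oriented tame links}\}/(\text{isotopy})$ is onto, the codomain is the image of a countable set and is therefore itself countable. The final step is to pass from oriented isotopy classes to the equivalence classes defined in the paper, where two links are equivalent if related by an orientation-preserving homeomorphism $h \colon \mathbb{S}^3 \to \mathbb{S}^3$. Equivalence under orientation-preserving homeomorphism is a coarser relation than oriented isotopy — indeed forgetting orientations of components and allowing ambient homeomorphisms only identifies more links together — so each equivalence class is a union of (possibly several, but at least one) isotopy classes. A surjection from a countable set descends to a surjection onto any quotient, so the set of equivalence classes is a quotient of a countable set and hence countable.

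The main point requiring care, and the step I expect to be the only genuine obstacle, is the reduction from \emph{tame} links to \emph{braid closures}. Alexander's theorem is classically stated for polygonal or smooth links, and one must confirm that the tameness hypothesis (equivalence to a polygonal link) is exactly what is needed to apply it; a wild link would not be captured by any braid closure, which is precisely why the hypothesis is present. One should also be mild about orientations: a link is an unoriented object in the paper's definition, but braid closures are naturally oriented, so I would note that each unoriented tame link admits at least one choice of orientation making it a braid closure, which suffices for surjectivity onto equivalence classes. With these observations in place the argument is complete, and no delicate estimate or construction is required beyond invoking the stated Alexander theorem and the elementary fact that a countable union of finitely generated groups is countable.
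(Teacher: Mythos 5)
Your proposal is correct and follows essentially the same route as the paper: countability of $B_\infty$ plus Alexander's theorem give a surjection from braids onto (isotopy classes of) tame links, which descends to the coarser equivalence by orientation-preserving homeomorphisms. If anything, your version is slightly cleaner, since you handle the orientation issue and the passage to the quotient explicitly and correctly omit the paper's invocation of Markov's theorem, which is not needed for a mere cardinality bound.
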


\begin{proof} Let   $B_{\infty} = \cup_{n=1}^{\infty} B_n$ be the braid group on infinite number of strings.
Any element of $B_n$ can be present by a  word in the alphabet $\{ \sigma_1^{\pm 1}, \sigma_2^{\pm 1}, \ldots, \sigma_{n-1}^{\pm 1} \}$. The set 
of such words is countable. Hence, the set of elements of $B_n$ is countable and since the union of countable number of countable sets is countable, the set of elements of $B_{\infty}$ is countable. 

For any braid $\beta \in B_{\infty}$ we can construct a corresponding geometric braid $b$ and take its closure $\hat{b}$, we get a link. By Alexander's theorem any tame  link in $\mathbb{S}^3$ can be construct by this procedure. Furthermore, by Markov's theorem, two links in $\mathbb{S}^3$ are equivalent if and only if there is a finite sequence of the so called Markov's moves, which sends a braid which represents the first link to the braid which represents the second link. In particular, the set of equivalence classes of tame links in  $\mathbb{S}^3$ is not bigger than the set of braids  in $B_{\infty}$, hence, it is countable. 
\end{proof}

\begin{remark} To prove that the number of equivalence classes of knots in $\mathbb{S}^3$ is countable, we can use another approach.  Joyce \cite{J} and Matveev \cite{Matveev} defined an algebraic system with one binary algebraic operation, which now is called a quandle (Matveev called it by a right invertible self-distributive groupoid). For any knot diagram $D_K$ of a knot $K$ in  $\mathbb{S}^3$  Joyce  and Matveev define a fundamental quandle $Q(D_K)$ and proved  that $Q(D_K)$ is an invariant of $K$, i.e. does not depend  on the diagram $D_K$.

 Let us show that the set of fundamental quandles is countable. Indeed, suppose that a diagram $D_K$ has $n$ crossings. Since, $D_K$ is a preimage of a circle,  it has  $n$ connected components. The quandle $Q(D_K)$ is generated by elements $x_1$, $x_2$, $\ldots$, $x_n$, one generator for one connected component, and is defined by $n$ relations (one for each crossing) of the form $x_k = x_i * x_j$,
if in this crossing met three connected components with labels $x_i$, $x_j$ and $x_k$. From this follows  that for fixed $n$ there are only finite number of fundamental quandles. Hence, there are countable number 
of fundamental quandles of tame knots in $\mathbb{S}^3$. Since, fundamental quandle is an almost complete  knot invariant (it means that if for knots $K$ and $J$ their fundamental quandles $Q(D_K)$ and $Q(D_J)$ are isomorphic, then $K$ is equivalent to $J$ or $K$ is equivalent to $-\bar{J}$, the mirror image of $J$ with the opposite orientation). From these observations follows the set of equivalence classes of  tame knots in  $\mathbb{S}^3$ is countable.
\end{remark} 

\medskip
 
A spatial graph $\Gamma$ (see \cite{K}) is a geometric realization of a combinatorial graph  in $\mathbb{S}^3$.
Two spatial graphs $\Gamma$ and $\Gamma'$ are  equivalent if there is an orientation-preserving homeomorphism $h \colon \mathbb{S}^3 \longrightarrow \mathbb{S}^3$ sending $\Gamma$ onto $\Gamma'$. A fundamental topological problem on spatial graphs is:
by an effective method, decide whether or not two given spatial graphs of a combinatorial graph are equivalent.

The theory of spatial graphs  is a generalization of  link theory  in 3-dimensional space.
L.~Kauffman \cite{Kauf} and independently D.~Yetter \cite{Y} defined an  equivalence relation on the set of spatial graphs and proved some analogous of Reidemeister theorem. 
 From Proposition 
\ref{S3} we get.

\begin{corollary}
The set of equivalence classes of  finite spatial graphs in the 3-sphere  $\mathbb{S}^3$ is countable.
\end{corollary}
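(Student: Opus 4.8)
The plan is to imitate the strategy of Proposition \ref{S3}: exhibit a countable set of combinatorial presentations of spatial graphs, and then invoke a Reidemeister-type theorem to conclude that equivalence is generated by finitely many local moves, so that the set of equivalence classes is a quotient of a countable set. First I would reduce to fixing the underlying combinatorial graph. A finite combinatorial graph is determined up to isomorphism by a finite vertex set and a finite edge set, so for each $n$ there are only finitely many combinatorial graphs on $n$ vertices, and hence only countably many finite combinatorial graphs in total. Since every spatial graph $\Gamma$ has such a finite underlying graph $G$, it suffices to show that for a fixed $G$ the set of equivalence classes of tame spatial embeddings of $G$ is countable; summing over the countably many choices of $G$ then gives the result.

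Next I would pass to diagrams. Because $\Gamma$ is tame, it is equivalent to a polygonal (hence piecewise-linear) spatial graph, which admits a regular projection onto a $2$-sphere: a generic image with finitely many transverse double points, none of which coincides with the image of a vertex. Decorating each double point with over/under information produces a \emph{spatial graph diagram}, a finite combinatorial object consisting of the planar graph that is $4$-valent at crossings, together with the marked vertices of $G$ and the crossing signs. For a fixed underlying graph and a fixed number $c$ of crossings there are only finitely many such decorated combinatorial diagrams, so, taking the union over $c \geq 0$, the set of all spatial graph diagrams is countable, exactly as the set of braid words was countable in Proposition \ref{S3}.

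Finally I would invoke the analogue of Reidemeister's theorem for spatial graphs, due to Kauffman \cite{Kauf} and Yetter \cite{Y}: two spatial graphs are equivalent if and only if any two of their diagrams are related by a finite sequence of generalized Reidemeister moves (the usual three moves together with the moves that slide and permute edge-ends around a vertex). This plays precisely the role that Markov's theorem played for links: it guarantees that each equivalence class of spatial graphs is realized by some diagram and that distinct classes correspond to disjoint sets of diagrams. Consequently the assignment sending a diagram to the equivalence class of the spatial graph it presents is a surjection from the countable set of diagrams onto the set of equivalence classes, which is therefore countable.

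The main obstacle I expect is not the counting, which is routine once the set-up is in place, but justifying the two topological inputs that make the reduction legitimate: that every tame spatial graph admits a regular diagram with finitely many crossings (so that the combinatorial encoding is finite), and that the Kauffman--Yetter move calculus is stated for the topological notion of equivalence used here (an orientation-preserving homeomorphism of $\mathbb{S}^3$), rather than only for rigid-vertex isotopy. Care must also be taken with vertices of high degree, where the list of vertex moves is longer; but since for each fixed combinatorial type the move set is still finite and the diagram set is still countable, the conclusion is unaffected.
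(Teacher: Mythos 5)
Your proof is correct, but it follows a genuinely different route from the paper's. The paper's own argument is a two-line reduction: it invokes the result of \cite{BK} that every finite spatial graph is a connected sum of a finite planar graph and a braid, and then observes that the set of finite planar graphs and the set of braids are both countable. Your argument instead counts diagrams: fix the (countably many) underlying combinatorial graphs, observe that a tame spatial graph has a polygonal representative admitting a regular projection with finitely many crossings, note that for fixed graph and fixed crossing number there are only finitely many decorated diagrams, and then use the Kauffman--Yetter calculus \cite{Kauf}, \cite{Y} so that the map from diagrams to equivalence classes is a surjection from a countable set. As you yourself point out, the move calculus is not strictly needed for countability --- surjectivity of ``diagram $\mapsto$ class'' suffices, just as Alexander's theorem alone (without Markov's theorem) already suffices in Proposition \ref{S3}; the one genuine topological input you must justify is that equivalence by an orientation-preserving homeomorphism of $\mathbb{S}^3$ coincides with ambient isotopy for tame objects, which holds because every orientation-preserving homeomorphism of $\mathbb{S}^3$ is isotopic to the identity (and note this same point is implicitly used by the paper as well). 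What each approach buys: the paper's proof is shorter and stays within the braid-theoretic framework running through the whole article, but it rests on the nontrivial structural decomposition of \cite{BK}; your proof is more elementary and self-contained, using only regular projections and the Reidemeister-type theorem that the paper already cites, and it generalizes readily to any setting where tame objects admit finite combinatorial diagrams.
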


\begin{proof}
 As was proved in \cite{BK} any  finite  spatial graph is a connected sum of a finite planar graph and  a braid. Since the set of finite planar graphs and  the set of braids are countable, we get the need assertion. 
\end{proof}

\bigskip


\section{Counting of links in 3-manifolds} \label{Man}

In this section we  consider links in 3-manifolds. 

{\it Proof of Theorem \ref{mt}.}
Let $H_g$ denote a handlebody of genus $g$. It usually is defined as 
$$
(\mbox{a closed disc} \setminus \{g~ \mbox{open discs} \}) \times I,
$$
 where $I$ is the unit interval. Equivalently, $H_g$ can be defined as
$$
( \mathbb{S}^3 \setminus ~\mbox{an open tubular neighbourhood of}~I_g),
$$
 where $I_g$ denotes the disjoint union of $g$ copies the interval $I$, all meeting at the point at infinity. Thus $H_g$ may be represented in  
 $\mathbb{S}^3$ by the trivial $g$-strand braid $I_g$ (see \cite{HL}). A   link $L$ in $H_g$ will be represented by the mixed link $I_g \cup L$ in $\mathbb{S}^3$. Two oriented links $L_1$, $L_2$ in $H_g$  are equivalent  in $H_g$ if and only if for the mixed links $I_g \cup L_1$ and $I_g \cup L_2$ there exists a homeomorphism $h \colon \mathbb{S}^3 \to \mathbb{S}^3$ which save the orientation, $h(I_g \cup L_1) = I_g \cup L_2$ and $h$ keeps $I_g$ pointwise fixed. 

As was proved in \cite{HL} any link in $H_g$ can be presented as the closure  of an algebraic mixed braid $I_g \cup B \in B_{g,n} \subseteq B_{g+n}$, denoted $I_g \cup\hat{B}$, is defined by joining each pair of the corresponding endpoints of the $B_n$-part by a vertical segment. Hence, any link in $H_g$ can be present by some braid which lies in $B_{g,n}$ for some $g$. Hence, by the result of the previous section we proved Theorem \ref{mt} for the case $M^3 = H_g$.


 Let now $M^3$ be the  complement of a link in $\mathbb{S}^3$. By the Alexander Theorem, this link is isotopic to the closure $\hat{B}$ of a braid $B$.
  So, we can write $M^3 = \mathbb{S}^3 \setminus \hat{B}$, it means that $M^3$ can be represented in $\mathbb{S}^3$ by $\hat{B}$. Further, let $M^3$ be a closed, connected, 
oriented 3-manifold (c.c.o. 3-manifold for simplicity). By results of Lickorish \cite{Lic} and Wallace \cite{W} $M^3$ can be
obtained from $\mathbb{S}^3$ by surgery along a framed link with integral framings. Without
loss of generality the surgery link can be assumed to be the closure $\hat{B}$ of a surgery
braid $B$. The framing of $\hat{B}$ induces a framing on the surgery braid $B$.
So, we can write $M^3 = \chi(\mathbb{S}^3, \hat{B})$ and $M^3$ can be represented in $\mathbb{S}^3$ by $\hat{B}$. Moreover, as was proved 
 in \cite{Lic}, all components of the surgery link can be assumed unknotted and  they can be equivalent to the closure of a pure braid. Thus, for
c.c.o.  3-manifolds we may assume $B$ to be a pure braid.

Suppose  now that $L$ is an oriented link in $M^3 = \mathbb{S}^3 \setminus \hat{B}$ or $\chi(\mathbb{S}^3, \hat{B})$. Fixing $\hat{B}$ pointwise we
may represent $L$ in $\mathbb{S}^3$ unambiguously by the mixed link $\hat{B} \cup L$, which consists of the fixed part $\hat{B}$ and the moving part $L$ that links 
with $\hat{B}$. A mixed link diagram is a diagram $\hat{B} \cup \tilde{L}$ of $\hat{B} \cup L$ on the plane of $\hat{B}$.
This plane is equipped with the top-to-bottom direction of $B$. By the Alexander Theorem and as explained in \cite[Theorem 5.3]{LR}, a diagram $\hat{B} \cup \tilde{L}$ of $\hat{B} \cup L$ may be turned into a mixed braid $\hat{B} \cup \beta$ with equivalent closure. This
is a braid in $\mathbb{S}^3$ with two different sets of strands.  One of the two sets comprises the fixed subbraid $B$, the other set of strands representing the link in the manifold $M^3$. So, $M^3$ may be represented in  $\mathbb{S}^3$ by the open braid $B$.

Further,  $L_1$ and $L_2$ are equivalent in $\mathbb{S}^3 \setminus \hat{B}$ 
if and only if the corresponding mixed links $\hat{B} \cup L_1$ and $\hat{B} \cup L_2$ are equivalent in $\mathbb{S}^3$ by a save orientation  homeomorphism of $\mathbb{S}^3$ which keeps $\hat{B}$ pointwise fixed.

Hence, if $M^3$ is a 3-manifold from Theorem \ref{mt}, then any link $L \subset M^3$ can be present by a braid from $B_{\infty}$, which depends on $M^3$. Since the set of such braids is countable and us was shown in \cite{L, HL, LR, LR1} the set of equivalence classes of links in $M^3$ is the quotient of braids by some generalization of Markov moves,   the set of equivalent classes of links is countable. The theorem is proved.

\bigskip


\section{Links in $\mathbb S^2\times\mathbb S^1$} \label{SS}

The manifold $\mathbb S^2\times\mathbb S^1$ plays important role in  dynamical systems.  Knot theory in this manifold  is the same as Knot theory  in the solid torus (handlebody of genus 1) or Knot theory in the complement of the unknot in $\mathbb S^3$. Hence, Corollary \ref{cor} follows from Theorem  \ref{mt}. On other side,  it is possible to give an independent proof. 

{\it Independent proof of Corollary \ref{cor}.} We shall use the fact that arbitrary link $L$ in $\mathbb{S}^2 \times \mathbb{S}^1$, after an isotopy, can be  draw as the closure of a tangle $P_L$ (see, for example, \cite[Figure 5]{DNPR}), which we can assume a tangle in  $\mathbb S^3$. Hence, if we show that the set of tangles, which represent tame links is countable, then the corollary will be proven. 

We will use some facts on the category of tangles, which can be found in \cite[Chapter 12]{Kas}. Tangles form a strict tensor category $\mathcal{T}$ as follows. The objects of 
$\mathcal{T}$ consist of finite sequence of $\pm$ signs, including the empty sequence $\emptyset$, and the morphisms of $\mathcal{T}$ are the isotopy classes of oriented tangles. The composition of tangles, $L \circ L'$ is obtained by placing $L'$ on top of $L$. Also, it is possible to introduce a tensor product on $\mathcal{T}$. If $L$ and $L'$ are isotopy classes of oriented tangles, $L \otimes L'$ is the isotopy class of the oriented tangle obtained by placing $L'$ to the right of $L$. The tangle category $\mathcal{T}$ equipped with the tensor product is a strict tensor category in, which, the unit $I$ is the empty set $\emptyset$. The strict tensor category $\mathcal{T}$ is generated by the six morphisms
$$
\cup, ~~\overset{\gets}{\cup}, ~~\cap, ~~\overset{\gets}{\cap},  ~~X_{+},  ~~ X_{-},
$$
and is defined by finite set of relations (see \cite[Theorem 12.2.2]{Kas}). This means that the set of tangles which closures correspond to oriented links in
 $\mathbb{S}^2 \times \mathbb{S}^1$  is countable.

To see that the set of equivalence classes of essential links in  $\mathbb{S}^2 \times \mathbb{S}^1$ is infinite we can present any link in this manifold as a link in $\mathbb{S}^3$ with added unknot $U$. It is easy to see that if the linking number $lk(U, K_i)$ is not equal to zero for some component $K_i$ of the link, then this link is essential. Since the number of links in  $\mathbb{S}^3$ is countable, the set of essential links in $\mathbb{S}^2 \times \mathbb{S}^1$ is infinite and countable. 

For studying links in $\mathbb{S}^2 \times \mathbb{S}^1$ we can present them as closures of braids in $B_{1,\infty}$ and use analogous of Markov theorem \cite{HL}. On this way it is possible to define some polynomial invariants and construct non-equivalent links. On the other side, links in $\mathbb{S}^2 \times \mathbb{S}^1$ can be presented as links in the solid torus $V = D^2 \times S^1$.
Although every braid $\beta$ on $n$ strings gives rise to a closed $n$-braid in the solid
torus and  two closed braids in $V$  are isotopic if they are isotopic as oriented links, in general, a link in $V$ is not isotopic 
to a closed braid in $V$. For instance, a link lying inside a small 3-ball in $V$ is never isotopic to a closed braid. But by Theorem 2.1 \cite{KT}, for any $n \geq 1$ and any $\beta$,
$\beta' \in B_n$, the closed braids $\hat{\beta}$, $\hat{\beta'}$ are isotopic in the solid torus if and only if $\beta$ and $\beta'$ are conjugate in $B_n$. Well known that the conjugacy problem is solvable  in the braid group.

\medskip

At the end we formulate the next questions.

\begin{question} $ $
 Is there a 3-manifold   (or a topological space) $M$ such that the set equivalence classes of links  in $M$ is uncountable? If these manifold (topological space)  exist what can we say on the fundamental group $\pi_1(M)$?
\end{question}

\begin{question} $ $
We know that a link $L$ in some 3-manifold $M^3$ can be present by mixed braids. Is it possible to find the fundamental  group $\pi_1 (M^3 \setminus L)$ of the complement $L$ in $M^3$ find, using the Artin representation of the braid groups $B_n$ by automorphisms of free groups (as in the case of links in 
$\mathbb{S}^3$)?
\end{question}

\section*{Acknowledgments}
The topological part of this work was supported by the Ministry of Science and Higher Education of Russia (agreement No. 075-02-2023-943). The dunamical part is supported by the RSF (project No. 21-11-00010). The authors thank Andrey Malyutin and Misha Neshchadim for useful discussions. Also, the authors thank the organizers of the Conference December Days in Tomsk (December 2022) for this beautiful conference, where we started  to work on this article.

\bigskip

Valeriy Bardakov$^{\dag, \ddag, ^*,**}$ (bardakov@math.nsc.ru),\\

Tatyana Kozlovskaya $^{\dag}$(t.kozlovskaya@math.tsu.ru),\\

Olga Pochinka $^{***}$ (olga-pochinka@yandex.ru)

~\\
$^{\dag}$ Tomsk State University, pr. Lenina 36, 634050 Tomsk, Russia,\\
$^{\ddag}$ Sobolev Institute of Mathematics, Acad. Koptyug avenue 4, 630090 Novosibirsk, Russia,\\
$^{**}$ Novosibirsk State Agrarian University, Dobrolyubova  160, 630039 Novosibirsk,  Russia.\\
$^{***}$ HSE,  B. Pecherskaya, 25/12,  603155, Nizhny Novgorod, Russia. 
\end{document}